\def \dim{\mathop{\rm dim}\nolimits}
\def \M{{\bf m}} 
\def \R{{\mathbb R}}
\def \C{{\mathbb C}}
\def \N{{\mathbb N}}
\def \AA{{\cal A}}
\def \EE{{\cal E}}
\def \KK{{\cal{K}}}
\newtheorem{theorem}{Theorem}[section]
\newtheorem{corollary}[theorem]{Corollary}
\newtheorem{proposition}[theorem]{Proposition}
\newtheorem{remark}[theorem]{Remark}
\newtheorem{remarks}[theorem]{Remarks}
\newtheorem{example}[theorem]{Example}
\newtheorem{definition}[theorem]{Definition}
\newtheorem{conjecture}[theorem]{Conjecture}
\newenvironment{proof}
 {\begin{trivlist} \item[\hskip \labelsep {\bf Proof.}]}
 {\hfill$\Box$\end{trivlist}}
\newenvironment{lproof}[1]
 {\begin{trivlist} \item[\hskip \labelsep {\bf Proof (#1).}]}
 {\hfill$\Box$\end{trivlist}}
\def\Gp{\varphi }
\def\K{\mathbb{K}}
\def\X{\xi }
\def\E{\eta }
\def\en{\end{eqnarray*}}
\def\lt{\left}
\def\rt{\right}
\def\fr{\frac}
\def\pr{\partial}
\def\Gth{\theta}
\def\Ga{\alpha}
\def\Gb{\beta}
\def\Gg{\gamma }
\def\<{\langle}
\def\M{\mathfrak{m}}
\def\LT{\mathop{\rm LT}\nolimits}
\def\id{\mathop{\rm id}\nolimits}
\def\cod{\mathop{\rm cod}\nolimits}
\def\mod{\mathop{\rm{\,mod}}\nolimits}
\def\dwi{\dfrac{\partial W_1}{\partial y}}
\def\dwii{\dfrac{\partial W_1}{\partial y}}
\def\derlog{{\rm{Derlog}}}
\begin{document}
\renewcommand{\theenumi}{(\roman{enumi})}
\normalsize

\title{Vector fields liftable over corank $1$ stable maps} 
\author{Kevin Houston and Daniel Littlestone\\
School of Mathematics \\ University of Leeds \\ Leeds, LS2 9JT, U.K. \\
e-mail: k.houston@leeds.ac.uk \\
http://www.maths.leeds.ac.uk/$\sim $khouston/
}
\date{\today }
\maketitle
\begin{abstract}
Generators for the module of vector fields liftable over corank $1$ stable complex analytic maps from an $n$-manifold to an $n+1$-manifold are found.  This is applied to the classification of the singularities occuring in generic one-parameter families of maps between these spaces.

MSC Classification: 58C25, 58K40, (53A07).

Keywords: liftable vector fields.
\end{abstract}

\section{Introduction}
In his influential paper \cite{wavefront} Arnol'd showed the importance of liftable vector fields in the study of singularities, in particular for wave fronts. Let $\K $ denote the field of real numbers or complex numbers and 
suppose that $f:\K ^n\to \K ^p$ is a smooth map.  A vector field $\X $ on $\K ^p$ is {\em{liftable}} if there exists a vector field $\eta $ on $\K ^n$ such that $df \circ \eta = \X \circ f$ where $df$ is the differential of $f$. These vector fields form a module over the ring of functions on $\K ^p$.
These are useful in the classification of singularities of mappings since one can integrate them to produce diffeomorphisms that preserve the properties of the map.

Arnol'd showed that for a map $\varphi :(\C ^n,0)\to (\C ^n,0)$ given by $\varphi (x_1,\dots , x_{n-1}, y)= (x_1,\dots , x_{n-1}, y^{n+1}+\sum_{i=1}^{n-1} x_iy^i)$ the module of liftable vector field is generated freely by $n$ vector fields.
For finitely $\AA $-determined complex analytic map germs $f:(\C ^n,0)\to (\C ^p,0)$ with $n\geq p$, the discriminant of $f$ is a hypersurface in $\C ^p$. The vector fields tangent to this hypersurface coincide with the liftable vector fields. 
Corollary~6.13 of \cite{looijenga}, following Bruce \cite{b2} and Zakalyukin \cite{Z}, shows that for a stable $f$ the vector fields tangent to the discriminant are generated freely by $p$ generators.

If the vector fields tangent to a hypersurface are freely generated, then the hypersurface is called a {\em{free divisor}}, \cite{saito}. The theory of free divisors is particularly good and there are many examples areas in which they occur naturally, such as in discriminants \cite{looijenga}, bifurcation theory \cite{bf,fss,gs,montaldi}, and hyperplane arrangements, \cite{ot}.

However, in the case of maps with $n<p$, where the discriminant of $f$ is the image, little is known about the liftable vector fields. We can see the problem in the case of $p=n+1$ where the image is a hypersurface. Even in this nice case the module of tangent vector fields is not a free module. 
For the cross cap, $\varphi : (\K ^2,0)\to (\K ^3,0)$ given by $\varphi (x,y)=(x,y^2,xy)$, the module of tangent vector fields is generated by {\em{four}} vector fields rather than three, which is the dimension of the codomain. This unfortunate state of affairs has meant that the theory of tangent vector fields for $n<p$ has been somewhat negligible in comparison with the $n\geq p$ case. 
There have been attempts to circumvent the problem. For example, Damon introduced the notion of Free* divisor, see \cite{damonfree}.

The only general results known to the authors for liftable and tangent vector fields for $n<p$ are in \cite{hm}: For a stable corank $1$ map from $(\C ^n,0)$ to $(\C ^{n+1},0)$ of multiplicity $k$ the module of tangent vector fields has projective dimension $1$ (and hence is not free) and the number of generators is $3k-2$. In the preamble and proof of Lemma 3.5 they explain a theory which, in effect, gives an algorithm for finding these generators but it is computationally labour intensive and the authors of \cite{hm} do not use it to explicitly calculate any vector fields.

In this paper, for the normal form of minimal stable corank $1$ maps from $(\K ^n,0)$ to $(\K ^{n+1},0)$ of multiplicity $k$, we describe $3k-2$ smooth liftable vector fields and show that, over the ring of complex analytic functions, these are generators of the module of complex analytic tangent vector fields. 
The normal form of a stable corank $1$ map from $(\K ^n,0)$ to $(\K ^{n+1},0)$ of multiplicity $k$ was first given by Morin in \cite{morin} and is of the form $\Gp_k:(\K^{2k-2},0)\to(\K^{2k-1},0)$ with
\begin{eqnarray*}
&&\Gp_k(u_1,\ldots ,u_{k-2},v_1,\ldots ,v_{k-1},y)\\
&&\qquad\qquad\qquad\qquad=\left(u_1,\ldots ,u_{k-2},v_1,\ldots ,v_{k-1},y^k+\sum_{i=1}^{k-2}u_iy^i,\sum_{i=1}^{k-1}v_iy^i\right)
\end{eqnarray*}
The case of $k=2$ gives the cross cap.

The second author found the generators by first using the computer package {\texttt{Singular}} (see \cite{singular}) to find the fields for low values of $k$ (computing power restricted this to $k<7$). Then by trial and error for each vector field $\xi $ he found a vector field in the domain $\eta $ so that $d\Gp _k\circ \eta = \xi \circ \Gp_k$. The results were then generalized, again with some trial and error, to the case of arbitrary $k$. (That the fields are lowerable has been verified via computer for $k<7$). The first author then proved that these vector fields
along with the Euler vector field generate the module of liftable vector fields in the case of $\K =\C$.

There are three families of liftable vector fields, each containing $k-1$ vector fields.  Using {\texttt{Singular}} it was conjectured that a field $\xi $ from any of the three families is such that $\xi (h)=0$ where $h$ is the defining equation of the image of $\varphi _k$.

The paper is arranged as follows. Section~\ref{sec:defs} gives the basic definitions such as corank $1$, stable, liftable, lowerable and so on. In Section~\ref{sec:main} we state the main results. We give three families of vector fields and show that each member is liftable, Theorems~\ref{firstfamily}, \ref{secondfamily}, and \ref{thirdfamily}. In Corollary~\ref{cor:generate} we show that, over the complex field, these families and the Euler vector field generate the module of tangent vector fields. In the next section it is proved that the vector fields are liftable.

Since we claim that liftable vector fields are useful we give in Section~\ref{sec:appl} an application of the results. We begin to classify linear functions on $\C ^p$ preserving the image of $\varphi _k$. This can be used to find corank $1$ maps from $\C ^n$ to $\C ^{n+1}$ of $\AA _e$-codimension $1$ (see Definition~\ref{defn:aecod}). These maps occur generically in one parameter families of maps and hence are of particular interest. 

In the proof of Theorem~7.3 of \cite{topaug}, following the ideas in \cite{damon1,cplxgen}, it is proved that a suitably generic linear function on the image of a trivial unfolding of a corank $1$ cross cap mapping can be used to produce an $\AA _e$-codimension $1$ map germ. (The converse is obviously true.) Here we reprove the result for {\em{minimal}} cross cap mappings (though note that the generalization to trivial unfoldings of these is not difficult) but in this case we can say precisely what is meant by `generic' in `generic linear function', see Corollary~\ref{cor:aecod1}. Thus we begin a classification similar to the one for the cross cap of multiplicity $2$ given in \cite{west}.

The second author thanks EPSRC for financial support during this work that led to his MPhil thesis, \cite{danthesis}.

\section{Definitions and standard results}
\label{sec:defs}

The definitions and results in this section are all known, see \cite{wall} for standard definitions of singularity theory and \cite{damonwark} for results on liftable and tangent vector fields.

In the intitial definitions in this section we shall mostly assume we are working with smooth (i.e., infinitely differentiable) maps, and will note that the theory for real analytic and complex analytic maps is similar. We shall have $\K =\R $ or $\K =\C $.
We recall the definition of $\AA $-equivalence. 
\begin{definition}
Two smooth map germs $f_1:(\K^n,0)\to(\K^p,0)$ and $f_2:(\K^n,0)\to(\K^p,0)$ are {\em{$\AA $-equivalent}} if there exist diffeomorphisms $\varphi$ and $\psi$ for which the following diagram commutes.
\[\begin{CD}
(\K^n,0) @>f_1 >> (\K^p,0)\\
@V{\Gp}VV @VV{\psi}V\\
(\K^n,0) @>f_2 >> (\K^p,0).
\end{CD}\]
\end{definition}
This is also known as {\em{Right-Left-equivalence}}.

The set of smooth (or $\K $-analytic) function germs $f:(\K^n,0)\to \K $ forms a ring, denoted by $\EE _{n}$, using pointwise addition and multiplication. We denote the tangent bundle of the germ $(\K ^n,0)$ by $T(\K ^n,0)$ and let $\pi _n:T(\K ^n,0)\to (\K ^n,0)$ be the natural projection.

For a map $f:(\K^n,0)\to(\K^p,0)$ we define the {\em{vector fields along $f$}}, denoted $\Gth(f)$, to be 
\[
\Gth(f)=\{\text{smooth maps }h:(\K^n,0)\to T(\K^p,0) \text{ such that }\pi_p\circ h=f\}.
\]

First note that $\theta (f)$ is a module over $\EE _n$ and $\theta (f)\cong \EE _n^p$, i.e., $\theta (f)$ is isomorphic to $p$ copies of $\EE _n$. Second note that for the identity map, $\id _n :(\K ^n,0)\to (\K ^n,0)$, we have that $\theta (\id _n)$ is isomorphic to the module of vector fields of $(\K ^n,0)$. We denote this latter by $\theta _n$. As $\theta _n\cong \EE _n^n$ we shall write a vector field on $(\K ^n,0)$ as an $n$-tuple of elements of $\EE _n$.

Consider the diagram
\[
\begin{CD}
T(\K^n,0) @> df >> T(\K^p,0)\\
@A{\E}AA @AA{\X}A\\
(\K^n,0) @>f >> (\K^p,0).
\end{CD}
\]
where $\E \in \theta _n$ and $\X \in \theta _p$ are vector fields. By the obvious compositions we can construct elements of $\theta (f)$. Therefore, consider the two maps $tf:\Gth_n\to\Gth(f)$, where $tf(\E)=df\circ\E$ and $wf:\Gth_p\to\Gth(f)$, where $wf(\X)=\X\circ f$. 

\begin{definition}
A map $f:(\K ^n,0)\to (\K ^p,0)$ is called {\em{stable}} if 
\[
\theta (f) =tf(\Gth_n)+wf(\Gth_p) .
\]
\end{definition}

\begin{remark}
Mather in \cite{matherv} showed that a map $f$ is stable in the above sense (which he called infinitessimally stable) if and only if there exists a neighbourhood in the space of smooth maps (with the Whitney topology) such that all maps in the neighbourhood are $\AA$-equivalent to $f$.
\end{remark}

\begin{definition}
We say a map $f:(\K ^n,0)\to (\K ^p,0)$ is {\em{corank $1$}} if the rank of the Jacobian matrix of $f$ at $0$ is equal to $\min(n,p)-1$, that is, it is one less than maximal.
\end{definition}
These are good maps to study as these are the first type of singular maps that one encounters (after studying immersions and submersions). 

\begin{example}
Let $f:(\K ^2,0)\to (\K ^3,0)$ be given by $f(x,y)=(x,y^2,xy)$. This is known as the {\em{cross cap}} or the {\em{Whitney Umbrella}}. This map is corank $1$ as one can verify by a quick calculation. The stable condition is shown by a more complicated calculation.
\end{example}
This mapping was generalized by Morin in \cite{morin} 
Here we shall restrict ourselves to the case where $p=n+1$.
\begin{definition}
For $k\geq 2$ the {\em{minimal cross cap mapping of multiplicity $k$}} is the map $\varphi _k:(\K^{2k-2},0)\to(\K^{2k-1},0)$ given by
\begin{eqnarray*}
&&\varphi _k(u_1,\ldots ,u_{k-2},v_1,\ldots ,v_{k-1},y)\\
&&\qquad\qquad\qquad\qquad=\left(u_1,\ldots ,u_{k-2},v_1,\ldots ,v_{k-1},y^k+\sum_{i=1}^{k-2}u_iy^i,\sum_{i=1}^{k-1}v_iy^i\right)
\end{eqnarray*}
We shall label the coordinates of the target $U_1,\ldots ,U_{k-2},V_1,\ldots ,V_{k-1},W_1$ and $W_2$, respectively. The sets of coordinates will be abbreviated to $\underline{U}$, $\underline{V}$ and $\underline{W}$ respectively.
\end{definition}
These maps are of interest because up to a trivial unfolding and up to $\AA $-equivalence they classify stable corank $1$ maps. (The precise statement is in the following theorem.) A trivial unfolding of a map $f:(\K ^n,0)\to (\K ^p,0)$ is a map $F:(\K ^{n+q},0)\to (\K ^{p+q},0)$ given by $F(x,z)=(f(x),z)$ where $x$ are coordinates of $\K ^n$ and $z$ are coordinates of $\K ^q$.
\begin{theorem}[\cite{morin}]
A germ $f:(\K ^n,0)\to (\K ^{n+1},0)$ is a stable corank $1$ germ if and only there exists a $k$ such that $f$ is $\AA $-equivalent to the trivial unfolding of the minimal cross cap mapping of multiplicity $k$.
\end{theorem}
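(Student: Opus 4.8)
The plan is to prove both implications, observing that the reverse one is routine. If $f$ is $\AA$-equivalent to a trivial unfolding of $\Gp_k$, then since a trivial unfolding of a stable germ is stable and $\AA$-equivalence preserves both stability and corank, it suffices to know that $\Gp_k$ itself is stable of corank $1$. This I would check directly from the infinitesimal stability criterion, verifying $\theta(\Gp_k)=t\Gp_k(\Gth_{2k-2})+w\Gp_k(\Gth_{2k-1})$ by applying the Malgrange preparation theorem to reduce every element of $\theta(\Gp_k)$ modulo $\Gp_k^*\M$ to a combination of the monomials $1,y,\dots,y^{k-1}$ with coefficients pulled back from the target. Hence the whole content lies in the forward implication.

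For the forward direction I would first invoke the constant-rank normal form. Since $\mathrm{rank}\, df_0=n-1$, there are coordinates on source and target in which $f(x,y)=(x,g_1(x,y),g_2(x,y))$ with $x\in\K^{n-1}$, $y\in\K$ and $g_1,g_2\in\M^2$; the kernel of $df_0$ is spanned by $\partial/\partial y$, which is the corank $1$ condition. The local algebra is then $Q(f)=\EE_n/(x_1,\dots,x_{n-1},g_1,g_2)\cong\K[[y]]/(\bar g_1(y),\bar g_2(y))$, where $\bar g_i(y)=g_i(0,y)$. Because $\K[[y]]$ is a discrete valuation ring this equals $\K[y]/(y^k)$ with $k=\min(\mathrm{ord}\,\bar g_1,\mathrm{ord}\,\bar g_2)\ge 2$, so the corank $1$ hypothesis forces the local algebra to be the curvilinear algebra $\K[y]/(y^k)$. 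One computes that $Q(\Gp_k)\cong\K[y]/(y^k)$ as well, and this integer $k$ is the multiplicity.

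Next I would exploit stability as versality. Stability of $f$ makes it an infinitesimally versal unfolding of its restriction to the singular direction, and via the preparation theorem this forces the family of coefficients of $g_1,g_2$ (expanded in powers of $y$ modulo $y^k$) to be a submersive family in the parameters $x$. After a coordinate change in $x$ one normalizes the essential coefficients to the standard shape $y^k+\sum_{i=1}^{k-2}u_iy^i$ and $\sum_{i=1}^{k-1}v_iy^i$, using a translation $y\mapsto y+c$ to kill the $y^{k-1}$ term of the leading component and absorbing constant terms into $W_1,W_2$; this consumes $2k-3$ of the parameters, while the remaining $n-1-(2k-3)=n-(2k-2)$ parameters enter trivially and constitute exactly a trivial unfolding of $\Gp_k$. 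In particular stability forces $n\ge 2k-2$, with $n=2k-2$ minimal. Alternatively I could bypass the explicit reduction and quote Mather's classification of stable germs by local algebra: two stable germs $(\K^n,0)\to(\K^p,0)$ with isomorphic local algebras are $\AA$-equivalent, so $f$ and the appropriately suspended $\Gp_k$, both stable of corank $1$ with algebra $\K[y]/(y^k)$ in the same dimensions, must be $\AA$-equivalent.

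The main obstacle is this versality/normal-form step: extracting from infinitesimal stability the precise statement that the coefficient family is submersive onto the standard parameter space, and bookkeeping which parameters are essential (yielding the $u_i$ and $v_i$) versus trivial. This is where the preparation theorem does the genuine work and where the count $2k-2$ emerges. By contrast the surrounding steps, namely the rank normal form, the computation of $Q(\Gp_k)$, and the stability check for $\Gp_k$, are concrete but comparatively mechanical.
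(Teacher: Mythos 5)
The paper offers no proof of this statement for you to be compared against: it is Morin's normal form theorem, quoted with the citation \cite{morin} and used as a black box throughout. So your proposal can only be judged on its own terms, and on those terms it is essentially correct: it is the standard Mather-theoretic argument. The reverse implication is indeed routine once $\Gp _k$ is checked to be infinitesimally stable (a preparation-theorem computation), since trivial unfolding and $\AA $-equivalence preserve both stability and corank. For the forward implication, your second route is the cleaner one: from the corank-$1$ normal form $f(x,y)=(x,g_1,g_2)$ compute $Q(f)\cong \K [y]/(y^k)$, observe $Q(\Gp _k)\cong \K [y]/(y^k)$, and invoke Mather's classification of stable germs by their local algebras; the explicit versality reduction you describe first is closer to Morin's original argument and rests on the theorem that stability of $f$ is equivalent to $\KK $-versality of $(g_1,g_2)$ viewed as an unfolding of $(\bar g_1,\bar g_2)$.

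Two points in your sketch need to be made explicit rather than assumed. First, $k=\min (\mathrm{ord}\,\bar g_1,\mathrm{ord}\,\bar g_2)$ only makes sense if the two restrictions $\bar g_i$ do not both vanish identically; this is not a consequence of corank $1$ alone and must be extracted from stability. For instance, infinitesimal stability gives $\theta (f)=tf(\theta _n)+f^*\M _{n+1}\theta (f)+\K \{e_1,\dots ,e_{n+1}\}$, so the $\KK _e$-normal space has dimension at most $n+1$, whereas $\bar g_1=\bar g_2\equiv 0$ would make it infinite dimensional (the image of $\partial f/\partial y$ then dies in the quotient). Second, before Mather's classification theorem can be applied you must know that the trivial unfolding of $\Gp _k$ actually exists in dimensions $(n,n+1)$, i.e.\ that $n\geq 2k-2$; Mather's theorem only compares stable germs living in the same pair of dimensions. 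Your parameter count ($2k-3$ essential unfolding parameters needed, only $n-1$ available) supplies exactly this bound, so the shortcut via local algebras is not quite independent of the versality bookkeeping: the dimension inequality from the first route is still required in the second. With these two points filled in, and granting the standard machinery (preparation theorem, versality of unfoldings, Mather's classification of stable germs), your proof goes through for both the smooth and complex analytic categories.
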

In the case of $n=2$, (up to $\AA $-equivalence) the cross cap is the only singular stable mono-germ $f:(\K ^2,0)\to (\K ^3,0)$.

%
%

We now come to the main objects of study: liftable vector fields.
\begin{definition}
\label{df:df1}
Let $f$ be a smooth mapping $f:(\K^n,0)\to(\K^p,0)$. A vector field $\X$ on $(\K^p ,0)$ is {\em{liftable over $f$}} if there is a vector field $\E$ on $(\K ^n,0)$ such that $df\circ\eta=\X\circ f$. That is, the following diagram commutes
\[
\begin{CD}
T(\K^n,0) @> df >> T(\K^p,0)\\
@A{\E}AA @AA{\X}A\\
(\K^n,0) @>f >> (\K^p,0).
\end{CD}
\]
In these circumstances $\E$ is called {\em{lowerable}}.
\end{definition}

\begin{example}
\label{eg:wulift}
For the Whitney umbrella $\varphi _2(v_1,y)=(v_1,y^2,v_1y)$ the following are liftable vector fields:
\[
\lt(\begin{array}{c} W_2 \\ 0 \\ V_1W_1 \end{array}\rt),  \lt(\begin{array}{c} -V_1 \\ 2W_1 \\ 0 \end{array}\rt), \lt(\begin{array}{c} 0 \\ 2W_2 \\ V_1^2 \end{array}\rt) \quad \text{and} \quad\lt(\begin{array}{c} V_1 \\ 2W_1 \\ 2W_2 \end{array}\rt).
\]
The corresponding lowerable vector fields are (respectively)
\[
\lt(\begin{array}{c} v_1y \\ 0 \end{array}\rt),  \lt(\begin{array}{c} -v_1 \\ y \end{array}\rt) , \lt(\begin{array}{c} 0 \\ v_1 \end{array}\rt)  \quad \text{and} \quad\lt(\begin{array}{c} v_1 \\ y \end{array}\rt).
\]
This can be shown by composing the Jacobian with each lowerable. For example, taking the second vector field in the list we have
\[
d\Gp_2\circ  
\lt(\begin{array}{c} -v_1 \\ y \end{array}\rt)
=\lt(\begin{array}{cc} 1 & 0 \\ 0 & 2y \\ y & v_1 \end{array}\rt)\lt(\begin{array}{c} -v_1 \\ y \end{array}\rt)=\lt(\begin{array}{c} -v_1 \\ 2y^2 \\ 0 \end{array}\rt)=\lt(\begin{array}{c} -V_1 \\ 2W_1 \\ 0 \end{array}\rt)\circ\Gp_2.
\]
The liftability of the other vector fields is just as easily verified.

In fact, it can be shown that these vector fields generate the module of liftable vector fields, see \cite{west}.
\end{example}

The goal of this paper is to find a set of generators for the module of vector fields liftable over $\varphi _k$, the cross cap map of multiplicity $k$.
This is achieved for $\K =\C $ and for the module of polynomial liftable real vector fields.

\begin{definition}
\label{df:de}
A mapping $f:(\K^n,0)\to(\K^p,0)$ is said to be {\em{quasihomogeneous}} (or {\em{weighted homogeneous}}) of type $(w_1,\ldots,w_n;d_1,\ldots,d_p)$, with $w_i,d_j\in\N \cup \{ 0 \} $ if the relation
\[f_j(t^{w_1}x_1,\ldots,t^{w_n}x_n)=t^{d_j}f_j(x_1,\ldots,x_n)\]
holds for each coordinate function $f_j$ of $f$ for all $t\in(\K,0)$. The number $w_i$ is called the {\em{weight}} of the variable $x_i$ and the number $d_j$ is the {\em{degree}} of the function $f_j$. 

Let $X_1,\dots ,X_p$ denote the coordinates on $\K ^p$. Then, the vector field given by
\[
\X_e=
\lt(\begin{array}{c} d_1 X_1 \\ \vdots \\ d_p X_p \end{array}\rt)
\]
is called the {\em{Euler vector field}} and is denoted by $\X_e$.
\end{definition}

\begin{proposition}
\label{prop:eulerlift}
If the map $f$ is quasihomogeneous, then the Euler vector field $\X_e$ is liftable over $f$.
\end{proposition}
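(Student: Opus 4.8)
The plan is to produce an explicit lowerable vector field witnessing the liftability of $\X_e$. The natural candidate is the Euler vector field on the source built from the weights, namely
\[
\E = \lt(\begin{array}{c} w_1 x_1 \\ \vdots \\ w_n x_n \end{array}\rt)\in \theta _n .
\]
I claim that $df\circ \E = \X_e\circ f$, which by Definition~\ref{df:df1} is precisely the assertion that $\X_e$ is liftable (with $\E$ the corresponding lowerable field).

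The key step is to derive the classical Euler relation from quasihomogeneity. For each coordinate $f_j$ the defining identity reads $f_j(t^{w_1}x_1,\ldots ,t^{w_n}x_n)=t^{d_j}f_j(x)$ for all $t\in(\K ,0)$. Differentiating both sides in $t$ by the chain rule and setting $t=1$ gives
\[
\sum_{i=1}^n w_i x_i \frac{\partial f_j}{\partial x_i}(x) = d_j f_j(x),\qquad j=1,\ldots ,p .
\]
I would then recognise the two sides of this identity as the two sides of the required commuting square, read componentwise: the left-hand side is the $j$-th entry of $df\circ \E$, while the right-hand side is the $j$-th entry of $\X_e\circ f$, since evaluating $\X_e=(d_1X_1,\ldots ,d_pX_p)$ at $X=f(x)$ returns $(d_1f_1(x),\ldots ,d_pf_p(x))$. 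Matching all $p$ components yields $df\circ \E = \X_e\circ f$.

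I do not expect a genuine obstacle here: the result is the standard consequence of the Euler relation for quasihomogeneous maps, and the argument is purely computational. The only points needing a little care are the bookkeeping in the chain rule, namely keeping the factor $w_i t^{w_i-1}x_i$ and evaluating it correctly at $t=1$, and the observation that the same differentiation is legitimate in the smooth, real-analytic and complex-analytic settings, so the single field $\E$ works uniformly for $\K =\R $ and $\K =\C $.
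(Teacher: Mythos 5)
Your proof is correct and is essentially identical to the paper's: both differentiate the quasihomogeneity relation in $t$ at $t=1$ to obtain the Euler relation $\sum_i w_i x_i \,\partial f_j/\partial x_i = d_j f_j$, and then read this componentwise as $df\circ\E = \X_e\circ f$ with $\E=(w_1x_1,\ldots,w_nx_n)$ the lowerable field. No differences worth noting.
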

\begin{proof}
Differentiating both sides of the relationship 
\[
f_j(t^{w_1}x_1,\ldots,t^{w_n}x_n)=t^{d_j}f_j(x_1,\ldots,x_n)
\]
and setting $t=1$ we find that 
\[
\sum_{i=1}^nw_ix_i\fr{\pr f_j}{\pr x_i}=d_jf_j .
\]
Since $X_j\circ f=f_j$ we deduce that
\[
J_f \lt(\begin{array}{c}w_1x_1 \\ \vdots \\ w_n x_n \end{array}\rt) = 
\lt(\begin{array}{c} d_1 X_1 \\ \vdots \\ d_p X_p \end{array}\rt) \circ f ,
\]
where $J_f$ is the Jacobian of $f$. That is, the Euler vector field is liftable.
\end{proof}


\begin{example}
It is easy to show that the cross cap mapping $\varphi _k$ is quasihomogeneous and that its Euler vector field is
\begin{displaymath}\X_e=\lt(\begin{array}{c}
(k-1)U_1\\
(k-2)U_2\\
\vdots\\
2U_{k-2}\\
(k-1)V_1\\
(k-2)V_2\\
\vdots\\
V_{k-1}\\
kW_1\\
kW_2
\end{array}\rt)\end{displaymath}
\end{example}

%
%

Vector fields liftable over $f$ are closely related to vector fields tangent to the discriminant of $f$. (The discriminant is the image under $f$ of the points for which the rank of the differential is less than $p$ and hence is equal to the image for $n<p$.)

\begin{definition}
Suppose that $V$ is a $\K $-analytic variety defined by the ideal $I(V)$ in $(\K ^p,0)$. A vector field $\X \in \theta _p$ is said to be {\em{tangent to $V$}} if
\[
\X(I(V))\subseteq I(V) .
\]  
The module of such vector fields is denoted $\derlog (V)$.
\end{definition}

\begin{example}
The image of $\varphi _2:(\C ^2,0)\to (\C ^3,0)$ is given by $W_2^2-V_1^2W_1=0$. The vector fields of Example~\ref{eg:wulift} are tangent to this image as can easily be checked.
\end{example}
For stable maps with $\K =\C$ the notions of liftable and tangent to the discriminant are equivalent. This equivalence was proved in \cite{wavefront} and \cite{saito}, (see also \cite{b2}), for $n\geq p $, and in \cite{damonwark} for $n<p$.

This useful equivalence does not hold for real analytic maps because in this case the image of map may not be an analytic set. For example, the image of $\varphi _2:(\R ^2,0)\to (\R ^3,0)$, does satisfy the real version of the equation $W_2^2-V_1^2W_1=0$ given in the preceding example. However, this real version adds an extra `handle', the $V_1$ axis, and hence the name `umbrella'. To get the image of $\varphi _2$ we need the additional inequality $V_1\geq 0$ for example and hence the image is not analytic.

%
%
\section{Main results}
\label{sec:main}
In this section we state the main results. We shall show that for the minimal cross cap mapping of multiplicity $k$, in addition to the Euler vector field, we find three families, each consisting of $k-1$ elements, of liftable vector fields. Proofs are deferred to the next section.

In the case of $\K =\C$ we shall show that these families and the Euler vector fields in fact generate $\derlog (V)$ where $V$ is the image of $\varphi _k$.
Furthermore, again over $\C $, it is conjectured that the three families generate the vector fields $\xi $ such that $\xi (h)=0$ where $h$ is the defining equation of the image of $\varphi _k$.

We shall denote the members of the families by $\xi ^f_j$ where $1\leq f\leq 3$ and $1\leq j\leq k-1$.
This can be written in component form as 
\[
\xi ^f_j=
\left(
\begin{array}{c}
A_{1,j}^f\\
\vdots\\
A_{k-2,j}^f\\
B_{1,j}^f\\
\vdots\\
B_{k-1,j}^f\\
C_{1,j}^f\\
C_{2,j}^f
\end{array}
\right) .
\]
That is, the entries of $\xi ^f_j$ that correspond to coordinates $U_1,\dots U_{k-2}$ are labelled with $A_1, \dots , A_{k-2}$, the entries that correspond to coordinates $V_1,\dots V_{k-1}$ are labelled with $B_1,\dots , B_{k-2}$, and  
the entries that correspond to coordinates $W_1$ and $W_2$ are labelled with $C_1$ and $C_2$ respectively.

Introducing `dummy' variables in $\underline{U}$ and $\underline{V}$ allows a very succinct description of the liftable vector fields. 
We shall define $U_{k-1}=V_k=0$, $U_k=1$ and $U_r=V_r=0$ for $r\le0$ and for $r>k$.

\begin{theorem}[\cite{danthesis}]
\label{firstfamily}
For $1\leq j \leq k-1$ the vector field given by the following components is liftable over $\varphi _k$:
\begin{eqnarray*}
A_{i,j}^1&=&(k-i)(k-j)U_iU_j   , \qquad 1\le i\le k-2,\\
B_{i,j}^1&=&k\sum_{r=1}^{i-1}U_{i+j-r}V_r-k\sum_{r=1}^iU_rV_{i+j-r}-(i-1)(k-j)U_jV_i\\
& & +\,kV_{i+j}W_1-kU_{i+j}W_2,  \qquad 1\le i\le k-1, \\
C_{1,j}^1&=&k(k-j)U_jW_1,\\
C_{2,j}^1&=&-kV_jW_1+(k-j)U_jW_2.
\end{eqnarray*}
\end{theorem}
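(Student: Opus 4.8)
The plan is to exhibit an explicit lowerable field $\eta$ and verify the defining relation $d\varphi _k\circ\eta=\xi^1_j\circ\varphi _k$ of Definition~\ref{df:df1} directly. Write $\eta=(a_1,\dots,a_{k-2},b_1,\dots,b_{k-1},c)$ in the source coordinates $(u_1,\dots,u_{k-2},v_1,\dots,v_{k-1},y)$. The Jacobian of $\varphi _k$ consists of an identity block in the $\underline u,\underline v$ directions together with only two nontrivial rows, namely
\[
\frac{\partial W_1}{\partial u_i}=y^i,\qquad \frac{\partial W_1}{\partial y}=ky^{k-1}+\sum_{i=1}^{k-2}iu_iy^{i-1},\qquad \frac{\partial W_2}{\partial v_i}=y^i,\qquad \frac{\partial W_2}{\partial y}=\sum_{i=1}^{k-1}iv_iy^{i-1},
\]
all remaining entries of these two rows vanishing (in particular $\partial W_1/\partial v_i=\partial W_2/\partial u_i=0$). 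Comparing the $\underline U$- and $\underline V$-components of the two sides of the liftability equation then forces $a_i=A^1_{i,j}\circ\varphi _k$ and $b_i=B^1_{i,j}\circ\varphi _k$; these impose no condition and merely record the first $2k-3$ entries of $\eta$. The whole content of the theorem therefore lies in the two scalar identities coming from the $W_1$- and $W_2$-rows.

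First I would dispatch the $W_1$-row. Writing $P=\partial W_1/\partial y$ and substituting $a_i=(k-i)(k-j)u_iu_j$, the row reads
\[
(k-j)u_j\sum_{i=1}^{k-2}(k-i)u_iy^i+Pc=k(k-j)u_j\Big(y^k+\sum_{i=1}^{k-2}u_iy^i\Big).
\]
Using $yP=k\,(W_1\circ\varphi _k)-\sum_{i=1}^{k-2}(k-i)u_iy^i$ (a one-line consequence of $\varphi _k$ being quasihomogeneous, cf.\ Proposition~\ref{prop:eulerlift}), this collapses to $P\bigl(c-(k-j)u_jy\bigr)=0$, which is satisfied precisely by $c=(k-j)u_jy$. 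Thus the $y$-component of the lowerable field is pinned down, and one checks en route that $c$ carries the expected weighted degree $k-j+1$.

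The main obstacle is the $W_2$-row. After substituting $c=(k-j)u_jy$ and $b_i=B^1_{i,j}\circ\varphi _k$, and using $yQ=k\,(W_2\circ\varphi _k)-\sum_{i=1}^{k-1}(k-i)v_iy^i$ for $Q=\partial W_2/\partial y$, the remaining requirement reduces to a polynomial identity in $y$ over $\K[\underline u,\underline v]$ of the shape
\[
\sum_{i=1}^{k-1}y^i\,b_i=-kv_j\,(W_1\circ\varphi _k)-(k-1)(k-j)u_j\,(W_2\circ\varphi _k)+(k-j)u_j\sum_{i=1}^{k-1}(k-i)v_iy^i.
\]
The left-hand side expands into several double sums: the two convolution sums $\sum_r u_{i+j-r}v_r$ and $\sum_r u_rv_{i+j-r}$, the diagonal term $(i-1)(k-j)u_jv_i$, and the two products $v_{i+j}(W_1\circ\varphi _k)$ and $u_{i+j}(W_2\circ\varphi _k)$ weighted by $y^i$. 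I would verify this by comparing the coefficient of each power $y^m$, systematically invoking the dummy conventions $U_{k-1}=V_k=0$, $U_k=1$ and $U_r=V_r=0$ for $r\le0$ or $r>k$ to absorb the boundary terms; the expected mechanism is that, after the reindexing $r\mapsto i+j-r$, the two convolution sums telescope against the contributions of $v_{i+j}(W_1\circ\varphi _k)$ and $u_{i+j}(W_2\circ\varphi _k)$, leaving exactly the three terms on the right.

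A cleaner route, which I would try first, is to pass to generating functions. With the dummy conventions one has $W_1\circ\varphi _k=\sum_i u_iy^i$ and $W_2\circ\varphi _k=\sum_i v_iy^i$, so the convolution sums are literally coefficients of the product of these two series, and the entire $W_2$-identity can be read off as an equality of formal power series in $y$. This reorganizes the bookkeeping and makes the cancellation transparent, at the price of carefully accounting for the finitely many correction terms forced by the truncations $U_{k-1}=0$, $U_k=1$ and $V_k=0$ (which convert full convolutions into the truncated sums actually appearing in $B^1_{i,j}$). Either way, the combinatorial handling of these boundary terms is where the real work sits; I emphasise that, once $c$ is fixed by the $W_1$-row, the $W_2$-row is a genuine condition rather than an automatic one, and its verification \emph{is} the substance of the theorem.
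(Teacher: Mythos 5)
Your plan is in substance the paper's own proof, entered by a different door. The paper replaces $\xi^1_j$ by $\widetilde{\xi}^1_j=\xi^1_j-(k-j)U_j\xi_e$ and lifts that field by a lowerable whose $y$-component is zero; since the Euler field lowers to $((k-1)u_1,\dots,2u_{k-2},(k-1)v_1,\dots,v_{k-1},y)$, subtracting $(k-j)U_j\xi_e$ is exactly the same manoeuvre as your positing $c=(k-j)u_jy$ for $\xi^1_j$ itself. Your derivation of $c$ from the $W_1$-row is a small presentational improvement: it shows that choice is forced, where the paper simply produces it. Moreover, after you absorb the terms $-(i-1)(k-j)u_jv_i$ into the right-hand side and use $\sum_{i=1}^{k-1}u_iy^i=(W_1\circ\varphi_k)-y^k$, your reduced $W_2$-identity becomes precisely the paper's Equation~\ref{eq:eq1}, so the two arguments converge on the identical polynomial identity.

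The genuine shortfall is that you stop exactly where the paper's proof does its work. The identity is stated and a cancellation ``mechanism'' is described, but nothing is verified --- and, as you yourself say, this verification is the substance of the theorem. In the paper it occupies a three-case analysis of the coefficient of $y^p$: for $1\le p\le k-1$ the convolution block cancels against the $v_{i+j}W_1$ and $u_{i+j}W_2$ contributions by setting $r=p-i$ (this is the telescoping you predict); for $p=k$ the stray term $v_jy^k$ is killed using $u_k=1$, $v_k=0$ together with the substitution $s=k+j-r$; for $p>k$ one must first argue that the dummy conventions truncate the summation limits to $p-k\le r\le k-j$ (otherwise subscripts exceed $k$) and then cancel internally under $s=p+j-r$. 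Nothing in your sketch would fail --- the mechanism you name is the correct one for the first case --- but the boundary cases $p\ge k$ are not covered by it, and they are exactly where the conventions $U_{k-1}=V_k=0$, $U_k=1$ genuinely intervene rather than merely ``absorb boundary terms''. As written, then, the proposal is a correct reduction plus an unproved identity; to become a proof it needs those case analyses carried out (or the generating-function variant you mention, with the truncation corrections made explicit).
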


\begin{theorem}[\cite{danthesis}]
\label{secondfamily}
For $1\leq j \leq k-1$ the vector field given by the following components is liftable over $\varphi _k$:
\begin{eqnarray*}
A_{i,j}^2&=&-k(k+i-j+1)U_{k+i-j+1}W_1+k\sum_{r=1}^i(k+i-j-2r+1)U_rU_{k+i-j-r+1}\\
& &-j(i+1)U_{i+1}U_{k-j}, \qquad 1\le i\le k-2,\\
B_{i,j}^2&=&-k(k+i-j+1)V_{k+i-j+1}W_1+k\sum_{r=1}^i(k+i-j-r+1)U_rV_{k+i-j-r+1}\\
& &-k\sum_{r=1}^irU_{k+i-j-r+1}V_r-j(i+1)U_{k-j}V_{i+1} , \qquad 1\le i\le k-1, \\
C_{1,j}^2&=&k(k-j+1)U_{k-j+1}W_1+jU_1U_{k-j},\\
C_{2,j}^2&=&k(k-j+1)V_{k-j+1}W_1+jV_1U_{k-j} .
\end{eqnarray*}
\end{theorem}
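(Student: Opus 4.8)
The plan is to verify liftability directly, in the style of Example~\ref{eg:wulift}: exhibit a lowerable field $\E $ and check $d\varphi _k\circ \E =\xi ^2_j\circ \varphi _k$ as an identity of maps $(\K ^{2k-2},0)\to T(\K ^{2k-1},0)$. Write $\E =(a_1,\dots ,a_{k-2},b_1,\dots ,b_{k-1},c)$ in the source coordinates $(u_1,\dots ,u_{k-2},v_1,\dots ,v_{k-1},y)$. The decisive structural observation is that the Jacobian of $\varphi _k$ has identity blocks in the $\underline U$- and $\underline V$-rows, so the only nonconstant rows are those of $W_1$ and $W_2$. Hence matching the $U_i$- and $V_i$-components forces $a_i=A^2_{i,j}\circ \varphi _k$ and $b_i=B^2_{i,j}\circ \varphi _k$, and the whole theorem collapses to the two scalar identities coming from the $W_1$- and $W_2$-rows, in which the single remaining unknown is the last component $c$.

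To state these, set $p(y):=W_1\circ \varphi _k$ and $q(y):=W_2\circ \varphi _k$, which under the dummy conventions are the finite sums $\sum _r u_r y^r$ and $\sum _r v_r y^r$; then $\pr (W_1\circ \varphi _k)/\pr y=p'(y)$ and $\pr (W_2\circ \varphi _k)/\pr y=q'(y)$. The $W_1$-row requires
\[
\sum _{i=1}^{k-2} y^i\,(A^2_{i,j}\circ \varphi _k)+p'(y)\,c=C^2_{1,j}\circ \varphi _k,
\]
and the $W_2$-row requires
\[
\sum _{i=1}^{k-1} y^i\,(B^2_{i,j}\circ \varphi _k)+q'(y)\,c=C^2_{2,j}\circ \varphi _k .
\]
First I would use the $W_1$-identity to pin down $c$: its right-hand side minus the $\underline U$-sum must be divisible by $p'(y)$, and the quotient is $c$. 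Carrying this out in low multiplicity suggests the closed form
\[
c=k\,y^{j}+\sum _{s=1}^{j} s(j-s+1)\,u_{k-s}\,y^{j-s},
\]
a polynomial of weighted degree $j$, consistent with $\xi ^2_j$ being quasihomogeneous of degree $j-1$.

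The engine for both identities is a generating-function encoding. Each convolution in $A^2_{i,j}$ and $B^2_{i,j}$ is a coefficient of a product of the series $p,p',q,q'$, so that after multiplying by $y^i$ and summing over $i$ the left-hand sides resum into closed expressions in $p,p',q,q'$ and the marker $U_{k-j}=u_{k-j}$, which one then compares with $C^2_{1,j}\circ \varphi _k$ and $C^2_{2,j}\circ \varphi _k$. The basic cancellation is the antisymmetry $r\leftrightarrow m-r$, which makes each \emph{full} convolution $\sum _r (m-2r)U_rU_{m-r}$ vanish; this is what ultimately forces the numerator to be a multiple of $p'$.

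The hard part will be that the sums in the statement are partial sums $\sum _{r=1}^i$, not full convolutions, so resumming them against $y^i$ does not immediately give clean products of series: the difference produces boundary terms localized at the dummy indices $k-1,k,k-j$, and these must be shown to cancel exactly against the explicit correction terms $-j(i+1)U_{i+1}U_{k-j}$ and $-j(i+1)U_{k-j}V_{i+1}$. Thus the two genuine obstacles are (a) proving the $W_1$-numerator is divisible by $p'(y)$, so that $c$ really is the polynomial above, and (b) proving that this \emph{same} $c$ also satisfies the $W_2$-identity. Since the two rows form an a priori overdetermined system for the single unknown $c$, their compatibility is the real content, and I expect (b) to be where the dummy-variable bookkeeping is most delicate, because there the mixed convolutions of $\underline U$ with $\underline V$ interact with both $p$ and $q$. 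I have checked that a single $c$ works for $k\le 4$, which is strong evidence that the compatibility is structural rather than accidental.
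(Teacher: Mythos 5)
Your reduction via the identity blocks of the Jacobian --- forcing $a_i=A^2_{i,j}\circ\varphi_k$ and $b_i=B^2_{i,j}\circ\varphi_k$ and leaving two scalar identities in the single unknown $c$ --- is exactly how the paper's proof begins, and your diagnosis that compatibility of this overdetermined system is the real content is also correct. The gap is that your closed form for $c$ is false, and falsely confirmed by your low-multiplicity checks. Because the $a_i$ are already pinned and $\K[\underline{u},\underline{v},y]$ is a domain, the $W_1$-row determines $c$ uniquely; the correct solution (the one the paper exhibits) is
\[
c_j=k\sum_{r=1}^{j}u_{k-j+r}\,y^{r}+j\,u_{k-j}
=k\,y^{j}+k\sum_{r=1}^{j-2}u_{k-j+r}\,y^{r}+j\,u_{k-j},
\]
whereas yours, after reindexing $s=j-r$, reads $k\,y^{j}+\sum_{r=1}^{j-1}(j-r)(r+1)\,u_{k-j+r}\,y^{r}+j\,u_{k-j}$: the interior coefficient should be $k$, not $(j-r)(r+1)$. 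The two expressions coincide precisely when every surviving interior term has $(j-r)(r+1)=k$ or hits a dummy variable, which happens for all admissible $j$ exactly when $k\le 4$ --- the range you tested. At $k=5$, $j=3$ they genuinely differ: with $\partial W_1/\partial y=5y^4+3u_3y^2+2u_2y+u_1$, the $W_1$-row forces
\[
c\cdot\frac{\partial W_1}{\partial y}=25y^{7}+40u_3y^{5}+25u_2y^{4}+(15u_3^{2}+5u_1)y^{3}+19u_2u_3y^{2}+(5u_1u_3+6u_2^{2})y+3u_1u_2,
\]
which is solved by $c=5y^{3}+5u_3y+3u_2$ but not by your $c=5y^{3}+4u_3y+3u_2$ (the $y^{5}$ coefficient of your product is $35u_3$, not $40u_3$). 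So the lowerable you propose does not exist.

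Even setting aside the wrong $c$, the proposal postpones the two steps that constitute the proof: divisibility of the $W_1$-numerator by $\partial W_1/\partial y$, and compatibility with the $W_2$-row, are stated as expectations rather than established. It is worth seeing how the paper kills both at once: it proves Theorems~\ref{secondfamily} and~\ref{thirdfamily} together by solving a single generic equation $C=\sum_i\alpha_iy^i+c\,\partial Z/\partial y$, where $X=\sum x_iy^i$, $Z=\sum z_iy^i$ and $x,z$ are only later specialized to $u$ or $v$; the exhibited solution $c_j$ depends only on the $x$-variables and $y$, so for the second family (where $x=u$ in both rows and only $z$ changes from $u$ to $v$) the same last component automatically satisfies both rows --- your compatibility problem (b) dissolves because the second row is verified generically in $z$. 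The generic identity is then checked by comparing coefficients of $y^p$ in the regimes $p>k$, $1\le p\le k$, and $p=0$. If you substitute the correct $c_j$ and push your resummation through with this generic-variable device, you recover essentially the paper's argument; as written, the proposal is a plan built around a key formula that fails from multiplicity $5$ onward.
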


\begin{theorem}[\cite{danthesis}]
\label{thirdfamily}
For $1\leq j \leq k-1$ the vector field given by the following components is liftable over $\varphi _k$:
\begin{eqnarray*}
A_{i,j}^3&=&-k(k+i-j+1)U_{k+i-j+1}W_2+k\sum_{r=1}^i(k+i-j-r+1)U_{k+i-j-r+1}V_r\\
&&-k\sum_{r=1}^irU_rV_{k+i-j-r+1}-k(i+1)U_{i+1}V_{k-j}, \qquad 1\le i\le k-2,\\
B_{i,j}^3&=&-k(k+i-j+1)V_{k+i-j+1}W_2+k\sum_{r=1}^i(k+i-j-2r+1)V_rV_{k+i-j-r+1}\\
&&-k(i+1)V_{i+1}V_{k-j}, \qquad 1\le i\le k-1, \\
C_{1,j}^3&=&k(k-j+1)U_{k-j+1}W_2+kU_1V_{k-j}\\
C_{2,j}^3&=&k(k-j+1)V_{k-j+1}W_2+kV_1V_{k-j}.
\end{eqnarray*}
\end{theorem}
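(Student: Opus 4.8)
The plan is to verify directly that the proposed field $\xi^3_j$ is liftable by exhibiting an explicit lowerable vector field $\eta$ on the source $(\K^{2k-2},0)$ and checking the equation $d\varphi_k \circ \eta = \xi^3_j \circ \varphi_k$ componentwise. Since the first $k-2$ target coordinates $\underline{U}$ and the next $k-1$ coordinates $\underline{V}$ are simply pulled back from the corresponding source coordinates (the map is a graph in these variables), the $U$- and $V$-rows of the Jacobian $J_{\varphi_k}$ are almost trivial, and reading off $\eta$ from the required $A^3_{i,j}$ and $B^3_{i,j}$ is mechanical: the component of $\eta$ in the $u_i$-direction must equal $A^3_{i,j}\circ\varphi_k$ and likewise for the $v_i$-directions. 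The substantive content lives entirely in the two remaining target coordinates $W_1 = y^k + \sum_{i=1}^{k-2}u_iy^i$ and $W_2 = \sum_{i=1}^{k-1}v_iy^i$, where the $y$-derivative couples all the source variables together.

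Concretely, I would first write $\eta = \sum_{i} a_i \,\partial/\partial u_i + \sum_i b_i\,\partial/\partial v_i + c\,\partial/\partial y$ and observe that the $U_r$- and $V_r$-rows of the lifting equation force $a_i = A^3_{i,j}\circ\varphi_k$ and $b_i = B^3_{i,j}\circ\varphi_k$ after substituting $U_r \mapsto u_r$, $V_r\mapsto v_r$, $W_1 \mapsto y^k+\sum u_sy^s$, $W_2\mapsto \sum v_sy^s$. The only genuine unknown is the scalar function $c$ (the $\partial/\partial y$-component), which I expect to take the clean form $c = \sum_s (\text{something})y^{s} $, plausibly $c = -k\,U_{k-j}\,y^{\,?}$-type terms dictated by the shape of the $C^3$ entries. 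The two remaining rows of $J_{\varphi_k}$, namely
\[
\frac{\partial W_1}{\partial y} = ky^{k-1}+\sum_{i=1}^{k-2} i\,u_i y^{i-1},
\qquad
\frac{\partial W_2}{\partial y} = \sum_{i=1}^{k-1} i\,v_i y^{i-1},
\]
then yield two scalar identities in the polynomial ring $\K[u,v,y]$ that must reduce to $C^3_{1,j}\circ\varphi_k$ and $C^3_{2,j}\circ\varphi_k$ respectively.

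The hard part will be verifying these last two identities, because they require collecting the various sums $\sum_r (k+i-j-r+1)U_{k+i-j-r+1}V_r$ etc.\ after substitution and matching powers of $y$. The natural tool is to introduce generating functions: set $P(y)=\sum_r U_r y^{\,k-r}$ and $Q(y)=\sum_r V_r y^{\,k-r}$ (so that $W_1$ and $W_2$ are, up to the convention $U_k=1$, essentially $P$ and $Q$ evaluated appropriately), whereupon the convolution-type sums appearing in $A^3_{i,j}$ and $B^3_{i,j}$ become coefficients of products $P\cdot Q$ or $Q\cdot Q$, and the weighted sums with factor $(k+i-j-r+1)$ become coefficients of derivatives such as $yP'(y)$. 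The dummy-variable conventions $U_{k-1}=V_k=0$, $U_k=1$, and $U_r=V_r=0$ outside $[1,k]$ are precisely what make these generating functions close up without boundary corrections, so I would lean on them heavily to absorb edge terms. Once both $C^3$ identities are confirmed as equalities of polynomials in $y$ with coefficients in $\K[u,v]$, liftability follows immediately, and by the quasihomogeneity already recorded the whole computation can be checked degree-by-degree as a consistency test.
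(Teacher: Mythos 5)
Your set-up coincides with the paper's: write $\eta=\sum_i a_i\,\partial/\partial u_i+\sum_i b_i\,\partial/\partial v_i+c\,\partial/\partial y$, note that the $U$- and $V$-rows of $J_{\varphi_k}$ force $a_i=A^3_{i,j}\circ\varphi_k$ and $b_i=B^3_{i,j}\circ\varphi_k$, and reduce liftability to the two scalar identities
\[
C^3_{1,j}\circ\varphi_k=\sum_{i=1}^{k-2}a_iy^i+c\,\frac{\partial W_1}{\partial y},
\qquad
C^3_{2,j}\circ\varphi_k=\sum_{i=1}^{k-1}b_iy^i+c\,\frac{\partial W_2}{\partial y}.
\]
This is where your proposal stops being a proof. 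The entire content of the theorem is the existence of a \emph{single} function $c$ satisfying both identities simultaneously, and you never produce it: you only guess that it is of ``$-kU_{k-j}y^{?}$-type''. That guess is wrong for this family in both the variables used and the shape: the correct $y$-component is built from the $v$-variables, namely (with the dummy-variable conventions, so $v_k=0$)
\[
c_j \;=\; k\,v_{k-j}\;+\;k\sum_{r=1}^{j}v_{k-j+r}\,y^r,
\]
a polynomial of degree $j$ in $y$ with $j+1$ terms, not a single $u$-monomial. Without an explicit $c$ there is nothing to verify; the generating-function scheme you outline ($P$, $Q$, convolution coefficients, $yP'$) is a plausible bookkeeping device but is never executed, and it is precisely in that execution --- the paper compares coefficients of $y^p$ in three regimes, $p>k$, $1\le p\le k$, and constants, using $U_k=1$, $U_{k-1}=V_k=0$ and the out-of-range conventions to make the sums telescope --- that the actual work lies. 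Existence of a lowerable field cannot be presumed and then ``checked''; exhibiting it is the theorem.

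A secondary point of comparison: the paper does not treat the third family in isolation. It proves Theorems~\ref{secondfamily} and \ref{thirdfamily} together by introducing neutral variables $x,z$ with $X=\sum_i x_iy^i$, $Z=\sum_i z_iy^i$ and solving one master equation $C=\sum_i\alpha_iy^i+c\,\partial Z/\partial y$; the four specializations $(x,z)$ with $x,z\in\{u,v\}$ then yield the $C_1$- and $C_2$-equations for both families at once, so the coefficient comparison is done exactly once. Your family-by-family plan is legitimate in principle, but it would require carrying out the computation separately for each of the two identities above, and as written you have not carried it out even once.
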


In theory we can divide the vector fields in the last family by $k$ and still have a nice description, we do not do so to emphasize the similarity of this third family with the second family.

\begin{example}
For $k=2$ the first three liftable vector fields in Example~\ref{eg:wulift} are $\xi _1^1$, $\xi _1^2$, and $\xi _1^3$ respectively, and the final one is $\xi _e$.

It is instructive to evaluate the vector fields for $k=3$ (as well as useful to have them listed somewhere for the purpose of calculating examples).

We find that 
\[
\xi ^1_1 = \left(
\begin{array}{c}
4U_1^2\\
-3U_1V_1+3V_2W_1-3U_2W_2 \\
3U_2V_1-3(U_1V_2+U_2V_1)-2U_1V_2 +3V_3 W_1 - 3U_3W_2 \\
6U_1W_1 \\
-3V_1W_1 + 2U_1 W_2 
\end{array}
\right) . 
\]
However, we have $U_2=V_3=0$ and $U_3=1$ and so one monomial simplifies and some monomials disappear. Therefore, we deduce the following for $\xi _1^1$ and, in a similar way, the descriptions of the other fields:
\[
\xi ^1_1 = \left(
\begin{array}{c}
4U_1^2\\
-3U_1V_1+3V_2W_1 \\
-5U_1V_2 - 3W_2 \\
6U_1W_1 \\
-3V_1W_1 + 2U_1 W_2 
\end{array}
\right) ,
\qquad
\xi ^1_2 =\left(
\begin{array}{c}
0\\
-3U_1V_2 - 3W_2 \\
3V_1 \\
0 \\
-3V_2W_1 
\end{array}
\right) , 
\] 
\[
\xi ^2_1 = \left(
\begin{array}{c}
6U_1\\
-3V_1 \\
-6V_2 \\
9W_1 \\
0 
\end{array}
\right) ,
\qquad
\xi ^2_2 =\left(
\begin{array}{c}
-9W_1 \\
2U_1V_2 \\
-3V_1 \\
2U_1^2\\
6V_2W_1 + 2U_1 V_1 
\end{array}
\right) , 
\] 
\[
\xi ^3_1 = \left(
\begin{array}{c}
9V_1 \\
-6V_2^2 \\
0 \\
9W_2 +3U_1V_2 \\
3V_1V_2
\end{array}
\right) ,
\qquad
\xi ^3_2 =\left(
\begin{array}{c}
-9W_2 - 3U_1 V_2\\
-3V_1 V_2\\
0 \\
3U_1 V_1\\
6V_2W_2 + 3V_1 ^2 
\end{array}
\right) ,
\] 
\[
\xi _e = \left(
\begin{array}{c}
2U_1 \\
2V_1 \\
V_2 \\
3W_1 \\
3W_2 
\end{array}
\right) .
\] 
\end{example}

Now we shall investigate the extent to which the liftable vector fields in these three theorems generate the module of liftable vector fields.
\begin{theorem}
\label{thm:polygenerate}
Let $\varphi _k:(\K ^{2k-2},0)\to (\K ^{2k-1},0)$ be given by the normal form for a corank $1$ minimal stable map of multiplicity $k$. 
Let $\xi $ be a liftable vector field such that its components are polynomials. 

Then, there exist polynomials $g_e$ and $g_{i,j}$ in $\EE _{2k-1}$ such that
\[
\xi = g_e \xi _e + \sum _{i=1}^3 \sum _{j=1}^{k-1} g_{i,j} \xi _j^i  .
\]
\end{theorem}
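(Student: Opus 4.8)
The plan is to treat $L$, the set of polynomial vector fields liftable over $\varphi _k$, as a graded module over the polynomial ring $P\subset\EE _{2k-1}$ and to recognise $\{\xi _e\}\cup\{\xi ^i_j\}$ as a homogeneous $P$-basis of $L/\M L$, where $\M=(U_1,\dots,U_{k-2},V_1,\dots,V_{k-1},W_1,W_2)$ is the irrelevant maximal ideal. Since $\varphi _k$ is quasihomogeneous, with $\deg U_i=\deg V_i=k-i$ and $\deg W_1=\deg W_2=k$ read off from $\xi _e$, the maps $w\varphi _k$ and $tf$ are degree-preserving, so $L$ is a graded submodule of $\theta(\varphi _k)$ and every polynomial liftable field splits into liftable weighted-homogeneous pieces. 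Hence I may assume $\xi$ is weighted-homogeneous of some degree $d$. A direct check of the formulas shows $\xi ^1_j$ has degree $k-j$, while $\xi ^2_j$ and $\xi ^3_j$ have degree $j-1$; so the generators distribute as exactly three fields in each degree $d\in\{0,\dots,k-2\}$ (namely $\xi _e,\xi ^2_1,\xi ^3_1$ when $d=0$, and $\xi ^1_{k-d},\xi ^2_{d+1},\xi ^3_{d+1}$ when $1\le d\le k-2$) together with the single field $\xi ^1_1$ in degree $k-1$, giving $3(k-1)+1=3k-2$ in all.

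By graded Nakayama it suffices to show these $3k-2$ fields span the graded vector space $L/\M L$. I would obtain $\dim_\K L/\M L=3k-2$ from the fact, proved in \cite{hm}, that the module has exactly $3k-2$ minimal generators (for a finitely generated graded module this number equals $\dim_\K L/\M L$, and it is unchanged under the base change $\R\rightsquigarrow\C$, so the real and complex counts agree); alternatively one can bound $\dim_\K(L/\M L)_d$ directly from the lifting equations below. Granting this, and since the grading splits $L/\M L$ into the degree blocks listed above, it is enough to prove that the three (resp. one) generators sitting in each degree are $\K$-linearly independent in $L/\M L$.

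The heart of the argument, and the step I expect to cost the most, is this independence. I would first record the two scalar lifting equations: writing $c$ for the $y$-component of a lowering field, liftability of $\xi=(A,B,C_1,C_2)$ is equivalent to the existence of a polynomial $c$ with $C_1\circ\varphi _k=\sum_{j}(A_j\circ\varphi _k)y^j+c\,\partial_y(\varphi _k^*W_1)$ and $C_2\circ\varphi _k=\sum_{j}(B_j\circ\varphi _k)y^j+c\,\partial_y(\varphi _k^*W_2)$. Comparing weighted degrees and extracting the top power of $y$ in these identities forces, for any weighted-homogeneous $\xi$ of negative degree, first $c=0$ and then $\xi=0$; thus $L$ lives in non-negative degrees and every component of a liftable field has zero constant term. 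Consequently every component of an element of $\M L$ lies in $\M^2$, so the coefficient of any single variable in any component is a $\K$-linear functional on $L_d$ that vanishes on $(\M L)_d$. It then remains to exhibit, in each degree, single-variable coefficients whose values on the generators form a nonsingular matrix. Here the dummy conventions $U_k=1$, $U_{k-1}=V_k=0$ do the work: for $1\le d\le k-2$ the substitution $U_{i+j}\mapsto U_k=1$ produces the primitive term $-kW_2$ in $B^1_{d}$, the term $-k^2W_1$ in $A^2_{d}$ and the term $-k^2W_2$ in $A^3_{d}$, so that the coefficients of $W_2$ in $B_d$ and of $W_1,W_2$ in $A_d$ give a diagonal, hence invertible, matrix; the degree $0$ block is separated by the $W_1,W_2$-coefficients of $C_1,C_2$ (where $\xi _e,\xi ^2_1,\xi ^3_1$ give the rows $(k,0,k),(k^2,0,0),(0,k^2,0)$), and in degree $k-1$ the field $\xi ^1_1$ carries the primitive term $-kW_2$ in $B_{k-1}$ and so is nonzero in $L/\M L$.

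Putting these together, the $3k-2$ generators are linearly independent in the $(3k-2)$-dimensional space $L/\M L$, hence a basis; by graded Nakayama they generate $L$ over $P$, and the resulting coefficients $g_e,g_{i,j}$ are polynomials, which is the assertion. The genuinely delicate points are the bookkeeping in the independence step, where one must track which monomials survive the dummy substitutions across all three families and all degrees, together with the two supporting facts that deserve their own short lemmas: that $L$ has no negative-degree elements, and that the minimal-generator count of \cite{hm} transfers to the graded polynomial module over both $\R$ and $\C$.
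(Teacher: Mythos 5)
Your route is genuinely different from the paper's, and its computational core checks out: the weighted degrees you assign ($\xi^1_j$ of degree $k-j$, while $\xi^2_j,\xi^3_j$ have degree $j-1$, giving three generators in each degree $0,\dots,k-2$ and one in degree $k-1$), the observation that components of elements of $\M L$ lie in $\M^2$ once negative-degree liftable fields are excluded, and the invertible matrices of linear-part coefficients (the primitive terms $-kW_2$ in $B_d$ for $\xi^1_{k-d}$ and $-k^2W_1$, $-k^2W_2$ in $A_d$ for $\xi^2_{d+1}$, $\xi^3_{d+1}$, coming from the dummy convention $U_k=1$) are all correct; I verified them against the listed $k=3$ fields. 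The paper proceeds quite differently and self-containedly: it computes leading terms of the $3k-2$ fields with respect to a negative lexicographic order, applies the division algorithm for modules to write $\xi=g_e\xi_e+\sum g_{ij}\xi^i_j+r$, and then kills the remainder $r$ using exactly the two scalar lifting equations you record, by comparing degrees in $y$. Your approach, if completed, would buy something the paper does not state: \emph{minimality}, i.e.\ that the $3k-2$ fields are a basis of $L/\M L$, which is relevant to Conjecture~\ref{conj:derlogv0}.

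The genuine gap is the step you defer to a ``short lemma'': importing the bound $\dim_\K L/\M L\le 3k-2$ from \cite{hm}. The Holland--Mond count concerns the \emph{complex-analytic} module $\derlog(V)$ over the analytic local ring $\OO_{2k-1}$, not your graded polynomial module $L$ over $\K=\R$ or $\C$. Bridging the two requires real work: (i) an identification of the analytic liftable module with $L\otimes_P\OO_{2k-1}$ (where $P$ is the polynomial ring), which needs finiteness of $\varphi_k$, so that $\theta(\varphi_k)/t\varphi_k(\theta_{2k-2})$ is finite over the target ring, together with flatness of $\OO_{2k-1}$ over the localized polynomial ring — or else the polynomial-generation algorithm of \cite{hm} that the paper itself only invokes later, in Corollary~\ref{cor:generate}, \emph{after} the present theorem is proved; and (ii) in the real case one cannot pass through ``tangent to the image'' at all (liftable and tangent differ over $\R$, as the paper emphasizes), so one needs the separate observation that a real polynomial field is polynomially liftable exactly when its complexification is, by splitting a complex lift into real and imaginary parts. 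Your fallback suggestion — bounding $\dim(L/\M L)_d$ directly from the lifting equations in every degree — is not an easier escape: carried out it is essentially the paper's remainder analysis. So as written the proof does not close; it closes only once these supporting lemmas (of which (i)--(ii) are the substantive ones) are actually proved, or once the imported count is replaced by a direct argument of the kind the paper gives.
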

\begin{proof}
Denote by $e_l$ by the vector $(0,0,\dots, 0,1,0,\dots ,0)^T\in \K ^{2k-1}$ which has zeroes in every position except at position $l$, where it has a $1$.

We apply a negative lexicographical ordering (see \cite{singular} page 14) to the variables in the codomain of $\varphi _k$, i.e., $(U_1, \dots U_{k-2}, V_1, \dots V_{k-1}, W_1, W_2)$. Then the leading terms, denoted $\LT $, of the vector fields are as follows:
\begin{eqnarray*}
\LT (\xi _e) &=& W_2 e _{2k-1} ,\\
\LT (\xi ^1_j) &=& -kW_2 e_{2k-j-2}, {\text{ for }} 1\leq j \leq k-2,\\
\LT (\xi ^2_j) &=&
\left\{
\begin{array}{ll}
k^2W_1 e_{2k-2} , &  {\text{ for }} j=1,\\
-k^2W_1 e_{j-1}, & {\text{ for }} 2\leq j \leq k-2, 
\end{array}
\right. \\
\LT (\xi ^3_j) &=&
\left\{
\begin{array}{ll}
k^2W_2 e_{2k-2} , &  {\text{ for }} j=1,\\
-k^2W_2 e_{j-1}, & {\text{ for }} 2\leq j \leq k-1, 
\end{array}
\right.
\end{eqnarray*}
(The terms in the vector fields may look quadratic but recall that $U_k=1$.)

Applying the Division Algorithm for modules, see \cite{cla} page 202, we can write $\xi $ as 
\[
\xi = g_e \xi _e + \sum _{i,j} g_{ij} \xi ^i_j + r 
\]
where $g_e$ and $g_{ij}$ are polynomials and where $r=0$ or $r$ is a $\K $-linear combination of monomials, none of which is divisible by any of the leading terms of the vector fields.

In particular, using the leading terms for the second and third families of liftable vector fields the $A_i$ and $C_1$ terms of $r$ do not contain any monomials divisible by $W_1$ or $W_2$.
Now, $r$ is obviously liftable as $\xi $ and $g_e \xi _e + \sum _{i,j} g_{ij} \xi ^i_j $ are, so there exist polynomial functions $a_i$ in $\EE _{2k-3}$ and $c$ in $\EE _{2k-2}$ 
such that
\[
\sum _{i=1}^{k-2} a_i(\underline{u},\underline{v}) y^i + c(\underline{u},\underline{v},y) \dfrac{\partial W_1 }{\partial y} = C_1(\underline{u},\underline{v}).
\]
If $c\neq 0$, then the degree in $y$ of $c\dfrac{\partial W_1 }{\partial y}$ is greater than $k-2$. Hence as each $a_i$ contains no $y$'s (as $A_i$ contains no $W_1$ or $W_2$) we cannot find a solution to this equation.

If $c=0$, then any non-zero $a_i$ leads to power of $y$ in the left-hand side of the equation, yet there are obviously none in the right-hand side. Therefore $a_i=c=0$ for all $1\leq i\leq k-2$. Hence $A_i=0$ and $C_1=0$. This just leaves us with the $B_i$, for $1\leq i\leq k-1$, and $C_2$ to determine.

Because of the form of the leading terms in family one and of the Euler vector field we know that the $B_i$ terms and the $C_2$ term of $r$ are not functions of $W_2$ (so we can write $C_2(\underline{U},\underline{V},W_1,W_2)$ as $C_2(\underline{U},\underline{V},W_1)$).
Since $r$ is liftable and $c=0$, there exist polynomial functions $b_i\in \EE _{2k-3}$ so that 
\[
\sum _{i=1}^{k-2} b_i(\underline{u},\underline{v}) y^i  = C_2\left( \underline{u},\underline{v},y^k+\sum_{i=1}^{k-2} u_iy^i \right) .
\]

If the degree of $W_1$ in $C_2$ is $s$, then the right-hand side of the equation is a polynomial in $y$ of degree $sk$.

If $s=0$, then we see by comparing constant terms in the above equation that we must have $b_i=0$ and $C_2=0$.

If $s\geq 1$, then the left-hand side of the equation must have a $y^{sk}$ term (with a function of $\underline{u}$ and $\underline{v}$ as coefficient). However, it is obvious that no such term exists since if $B_i$ has a polynomial in $W_1$ of degree $m$, then the polynomial in $y$ resulting from composition with $\varphi _k$ has a term of degree $mk+i$ where $1\leq i \leq k-1$. So we require $mk+i=sk$, i.e., $i=k(m-s)$. Therefore, $B_i=C_2=0$ for $1\leq i \leq k-1$.

This means that $r=0$ and we can say that $\xi $ is generated by a linear combination of our liftable vector fields.
\end{proof}

Let us now introduce some notation. We shall denote a module generated by elements $z_1,\ldots,z_n$ as $\langle z_1,\ldots ,z_n\rangle $ or $\langle z_i\rangle _{i=1}^n$.

\begin{definition}
Suppose that $V$ is a $\K $-analytic variety defined by the ideal $I(V)=\langle f_1 , \dots , f_q \rangle $ in $(\K ^p,0)$. 
We define
\[
\derlog _0(V)= \{ \xi \in \theta _p  : \, \X( f_ j ) =0 , \ j=1 ,\dots q \} .
\]
\end{definition}
Damon and Mond show in \cite{dm} that for a quasihomogeneous hypersurface we have
\[
\derlog (V) \cong  \langle \X _e \rangle \oplus \derlog _0(V) ,
\]
where $ \langle \X _e \rangle$ is the module in $\theta _p$ generated by $\X _e$. (They show this in the complex analytic case but the same proof holds for real analytic mappings.)

\begin{corollary}
\label{cor:generate}
Let $\varphi _k:(\C ^{2k-2},0)\to (\C ^{2k-1},0)$ be given by the normal form for a corank $1$ minimal stable map of multiplicity $k$ and $V$ be its image. Then,
\[
\derlog (V) = \langle \xi _ e, \X_j^1, \X_j^2, \X_j^3 \rangle _{j=1}^{k-1} .
\]

That is, the module of vector fields liftable over $\varphi _k$ is generated by the vector fields $\X_j^1, \X_j^2, \X_j^3$ for $1\le j\le k-1$, together with the Euler vector field $\X_e$.
\end{corollary}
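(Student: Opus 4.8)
The plan is to deduce the analytic statement from the polynomial one already proved in Theorem~\ref{thm:polygenerate}, using the equivalence between liftable vector fields and vector fields tangent to the image recalled above. Write $\EE=\EE_{2k-1}$ for the ring of analytic germs, $R=\C[\underline U,\underline V,\underline W]$ for the polynomial ring with maximal ideal $\mathfrak{m}$ at the origin, and let $h$ be a reduced defining equation of the image $V$, so that $I(V)=(h)$ and $\derlog(V)=\{\X\in\theta_{2k-1} : \X(h)\in(h)\}$; that the image of a stable map is a reduced hypersurface is standard.

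First I would dispose of the inclusion $\supseteq$. Each generator $\X_e,\X^1_j,\X^2_j,\X^3_j$ is liftable over $\varphi_k$, by Proposition~\ref{prop:eulerlift} and Theorems~\ref{firstfamily}, \ref{secondfamily}, \ref{thirdfamily}; since $\varphi_k$ is stable and $\K=\C$, the cited equivalence places each of them in $\derlog(V)$. The liftable fields form an $\EE$-module (if $\X$ lifts via $\E$ then $g\X$ lifts via $(g\circ\varphi_k)\E$), so the submodule $M=\langle \X_e,\X^1_j,\X^2_j,\X^3_j\rangle_{j=1}^{k-1}$ satisfies $M\subseteq\derlog(V)$.

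The real content is the reverse inclusion, which amounts to upgrading Theorem~\ref{thm:polygenerate} from polynomial fields to arbitrary analytic germs. I would view $\derlog(V)$ as the kernel of the $\EE$-linear map $D\colon\theta_{2k-1}\to\EE/(h)$, $D(\X)=\X(h)\bmod(h)$; since $h$ is a polynomial this map is already defined over $R$ and over $R_{\mathfrak m}$. The proof of Theorem~\ref{thm:polygenerate} uses the negative-lexicographic (local) ordering, so it is valid at the germ level and, together with the equivalence, identifies $\ker(D)$ over $R_{\mathfrak m}$ with $\langle \X_e,\X^i_j\rangle_{R_{\mathfrak m}}$. Now $\EE$ is faithfully flat over $R_{\mathfrak m}$, so tensoring is exact and commutes with kernels, giving
\[
\derlog(V)=\ker(D\otimes_{R_{\mathfrak m}}\EE)=\ker(D)\otimes_{R_{\mathfrak m}}\EE=\langle \X_e,\X^i_j\rangle_{R_{\mathfrak m}}\otimes_{R_{\mathfrak m}}\EE=M ,
\]
which is exactly the desired generation over $\EE$.

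I expect the flat base change to be the main obstacle, or at least the point needing the most care: one must make sure that ``liftable'' for polynomial fields really coincides with the algebraic tangency condition cutting out $\ker D$ at the localised level --- so that Theorem~\ref{thm:polygenerate} genuinely computes the kernel and not a proper submodule --- and that the kernel commutes with the base change to $\EE$. A more self-contained alternative avoids flatness by exploiting weighted homogeneity: since $\varphi_k$ and every generator are weighted homogeneous, $\derlog(V)$ is a graded module, so any analytic $\X\in\derlog(V)$ decomposes as $\X=\sum_d\X_{(d)}$ into weighted-homogeneous polynomial fields, each again tangent to $V$ and hence, by Theorem~\ref{thm:polygenerate}, a combination of the generators with homogeneous coefficients of predictable degree; reassembling these coefficients degree by degree produces analytic coefficients, the only delicate point being convergence, which is controlled by the degree-preserving nature of the homogeneous division. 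Either route reduces the corollary to Theorem~\ref{thm:polygenerate}; the homogeneity argument is more elementary but relocates the difficulty into the convergence bookkeeping, whereas the flatness argument is shorter at the cost of importing standard facts about the analytic local ring.
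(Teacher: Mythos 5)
Your argument is essentially correct but takes a genuinely different route from the paper's own proof. The paper never base-changes from the algebraic to the analytic local ring; instead it (1) uses the Mond--Pellikaan algorithm to see that the defining equation $h_V$ is the determinant of a matrix of the form $M(\underline{U},\underline{V},W_1)-W_2I_k$, whence comparing coefficients of $W_2^k$ gives $\xi_e(h_V)=k^2h_V$, (2) invokes the Damon--Mond splitting $\derlog (V)\cong\langle \X_e\rangle\oplus\derlog _0(V)$ for quasihomogeneous hypersurfaces, (3) quotes Holland--Mond (the algorithm on pp.~613--614 of that paper) to assert that $\derlog _0(V)$ admits \emph{polynomial} generators, and (4) only then applies Theorem~\ref{thm:polygenerate} to those polynomial generators. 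Your flatness argument dispenses with (1)--(3) entirely: faithful flatness of $\EE_{2k-1}$ over $R_{\mathfrak m}$ upgrades Theorem~\ref{thm:polygenerate} directly to analytic coefficients, with no need to split off the Euler field or to know in advance that polynomial generators of $\derlog_0(V)$ exist. The point you flag as delicate is indeed the one needing supplementation: Theorem~\ref{thm:polygenerate} concerns \emph{liftable} polynomial fields, so to identify $\ker D$ over $R_{\mathfrak m}$ with the span of the generators you must (a) clear denominators to pass from $R_{\mathfrak m}$ to $R$, and use Damon's equivalence (valid since $\K=\C$ and $\varphi_k$ is stable) to convert algebraic tangency of a polynomial field into liftability, so that Theorem~\ref{thm:polygenerate} applies, and (b) use faithful flatness once more, via $(h)\EE_{2k-1}\cap R_{\mathfrak m}=(h)R_{\mathfrak m}$, to see that the generators themselves lie in the algebraic kernel. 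With those checks written out, your route is sound and arguably more self-contained than the paper's, which leans on the external Holland--Mond input; what the paper's route buys is a very short deduction plus the structurally interesting identity $\xi_e(h_V)=k^2h_V$ and the explicit splitting. Your graded alternative is closer in spirit to the paper's Damon--Mond step, but, as you note, its convergence bookkeeping essentially reduces to the same flatness fact (e.g.\ by passing through $\C[[x]]$ and using faithful flatness of $\C[[x]]$ over $\C\{x\}$), so it is not really more elementary.
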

\begin{proof}
The image of $\varphi _k$ is a hypersurface which we denote by $V$ and by $h_V$ its defining function.
One can explicitly calculate this defining function using the algorithm in Section~2.2 of \cite{mp}. However, we shall use only that the algorithm gives $h_V$ as the determinant of a $k\times k$ matrix which is the sum of a $k\times k$ matrix with entries in the variables $\underline{U}$, $\underline{V}$ and $W_1$ plus the matrix $-W_2I_k$ where $I_k$ is the identity matrix of size $k$.

Since $\xi _e$ is liftable by Proposition~\ref{prop:eulerlift} and complex liftable vector fields are tangent to the image, $\xi _e(h_V)$ is in the ideal generated by $h_V$. By considering the coefficient of $W_2^k$ we can see that $\xi _e (h_V)=k^2h_V$.
Hence it is possible to decompose $\derlog (V)$ into $\langle \X _e \rangle \oplus \derlog _0(V)$ just as Damon and Mond do for quasihomogeneous hypersurfaces.

From the algorithm on pages 613 to 614 in \cite{hm} we know that  $\derlog _0(V)$ can be generated by polynomials. By Theorem~\ref{thm:polygenerate} any polynomial can be given in terms of
$\xi _ e, \X_j^1, \X_j^2, \X_j^3 $ for $1\leq j \leq k-1$ and hence $\derlog _0(V)$ is generated by these liftables. Using the decomposition of $\derlog (V)$ above we deduce the statement in the corollary.
\end{proof}

\begin{conjecture}
\label{conj:derlogv0}
It is natural to conjecture the following:
\begin{enumerate}
\item For $V$ in the preceding corollary the vector fields $\X_j^1, \X_j^2, \X_j^3$ for $1\le j\le k-1$ generate $\derlog _0(V)$. 
\item These liftable vector fields form a Gr\"ober basis.
\end{enumerate}
\end{conjecture}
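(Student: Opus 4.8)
The plan is to deduce part (i) from the single assertion that every family member annihilates the defining equation $h_V$, and to settle part (ii) by Buchberger's criterion built on the leading terms already isolated in the proof of Theorem~\ref{thm:polygenerate}. Suppose first that $\X_j^i(h_V)=0$ for all $i$ and $j$. Then part (i) is immediate: given $\zeta\in\derlog _0(V)\subseteq\derlog (V)$, Corollary~\ref{cor:generate} lets us write $\zeta=g_e\X_e+\sum_{i,j}g_{i,j}\X_j^i$ with $g_e,g_{i,j}\in\EE_{2k-1}$, and applying $\zeta$ to $h_V$ and using $\X_e(h_V)=k^2h_V$ together with $\X_j^i(h_V)=0$ gives $0=\zeta(h_V)=k^2g_eh_V$; hence $g_e=0$ since $\EE_{2k-1}$ is an integral domain and $h_V\ne0$. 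Thus $\zeta\in\langle\X_j^i\rangle$, while the reverse inclusion holds precisely because each $\X_j^i$ lies in $\derlog _0(V)$.

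So everything rests on proving $\X_j^i(h_V)=0$. Liftability already gives tangency, so $\X_j^i(h_V)=c_{i,j}h_V$ for some $c_{i,j}\in\EE_{2k-1}$, and the task is to show $c_{i,j}=0$. Since $\Gp_k$ is quasihomogeneous, each $\X_j^i$ is a weighted-homogeneous vector field and $c_{i,j}$ must be weighted homogeneous of the degree of that field (for the first family this degree is $k-j$, for the other two it is $j-1$). These degrees are non-negative, so homogeneity alone does not force $c_{i,j}=0$ and a finer comparison is needed. I would use the determinantal form $h_V=\det(N-W_2I_k)$ from \cite{mp}, in which $W_2$ enters only on the diagonal, so that $h_V=(-W_2)^k+(\text{terms of lower degree in }W_2)$ with nonzero constant top coefficient. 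Comparing the coefficient of the highest surviving power of $W_2$ on the two sides of $\X_j^i(h_V)=c_{i,j}h_V$ — the left side being governed through $\X(\det M)=\mathrm{tr}\bigl(\mathrm{adj}(M)\,\X(M)\bigr)$ by the $C_2$-component of the field — should force $c_{i,j}$ to vanish; for instance $\X_1^2$, whose $C_2$-component is identically zero after the substitutions $V_k=U_{k-1}=0$, produces no pure $W_2^k$ term at all, so its (constant) proportionality factor must be $0$. Carrying this coefficient comparison uniformly across all three families and all $j$, with $h_V$ known only implicitly as a determinant, is the step I expect to be the main obstacle.

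For part (ii) I would verify Buchberger's criterion for the three families under the negative lexicographical order of Theorem~\ref{thm:polygenerate}. The first task is to complete the list of leading terms, since that proof recorded them only for $1\le j\le k-2$ in the first family and did not treat the endpoints $j=k-1$; one then reads off which standard basis vectors $e_l$ are carried by more than one leading term. From the terms already listed, the first family occupies a block of indices disjoint from the other two, whereas $e_{2k-2}$ is shared by $\X_1^2$ and $\X_1^3$ and every basis vector carried by the second family is also carried by the third, producing the pairs $\X_j^2,\X_j^3$. Only such overlapping pairs give rise to S-vectors, and I would reduce each to zero modulo the three families by the module division algorithm. Morally the Gr\"obner property should be inherited from the proof of Theorem~\ref{thm:polygenerate}, which in effect exhibits $\{\X_e\}\cup\{\X_j^i\}$ as a Gr\"obner basis for the entire polynomial liftable module: because $\LT(\X_e)=W_2e_{2k-1}$ occupies the basis vector $e_{2k-1}$ alone, deleting $\X_e$ and restricting to $\derlog _0(V)$ ought to retain it. The obstacle here is bookkeeping rather than conceptual — confirming that no unexpected basis vectors coincide once the endpoint fields are included, and that the overlapping S-vectors reduce cleanly under the conventions $U_k=1$ and $U_{k-1}=V_k=0$.
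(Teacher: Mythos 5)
The statement you are trying to prove is a \emph{conjecture} in the paper: the authors offer no proof of either part, only evidence (part (i) is known for $k=2$ from \cite{damonwark,west}, and has been checked by {\texttt{Singular}} for $k\le 6$). So there is no paper proof to compare against, and the real question is whether your argument closes the conjecture. It does not. Your reduction of part (i) is correct as far as it goes: if $\X^i_j(h_V)=0$ for all $i,j$, then for $\zeta\in\derlog_0(V)$ Corollary~\ref{cor:generate} gives $\zeta=g_e\X_e+\sum g_{i,j}\X^i_j$, applying everything to $h_V$ gives $k^2g_eh_V=0$, and since $\EE_{2k-1}$ is an integral domain and $h_V\neq 0$ you get $g_e=0$. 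But the hypothesis $\X^i_j(h_V)=0$ is itself exactly the \emph{other} open conjecture of the paper (stated in the introduction and again at the start of Section~\ref{sec:main}: "it is conjectured that the three families generate the vector fields $\xi$ such that $\xi(h)=0$" --- in particular that they annihilate $h$), also supported only by {\texttt{Singular}} computations. You acknowledge this yourself: the coefficient comparison in $\X^i_j(h_V)=c_{i,j}h_V$ is carried out only for the single field $\X^2_1$ (where $C^2_{2,1}=0$ makes it easy), and you flag the general case as "the main obstacle". So what you have is a reduction of one open conjecture to another, not a proof of part (i).

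Part (ii) has the same character: it is a plan, not an argument. You never compute the leading terms of the endpoint fields ($\X^1_{k-1}$, $\X^2_{k-1}$), which the proof of Theorem~\ref{thm:polygenerate} indeed omits; the claimed overlap pattern among the $e_l$'s is guessed from the incomplete list; and no S-vector is actually reduced to zero. There is also a logical ordering problem: Buchberger's criterion certifies that a set is a Gr\"obner basis \emph{of the module it generates}, so to conclude that the families form a Gr\"obner basis of $\derlog_0(V)$ you must already know they generate $\derlog_0(V)$ --- i.e.\ part (i) --- or else weaken (ii) to a statement about $\langle\X^i_j\rangle$. As it stands, both parts of your proposal terminate in steps you explicitly cannot complete, which is consistent with the paper's own decision to leave the statement as a conjecture.
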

The first statement is true for $k=2$, see \cite{damonwark} or \cite{west}. The second author has verified the statement via the computer algebra package {\texttt{Singular}} for $k\leq 6$.

Now let us turn to the case of real mappings.
\begin{corollary}
\label{cor:realgenerate}
Let $\varphi _k:(\R ^{2k-2},0)\to (\R ^{2k-1},0)$ be given by the normal form for a corank $1$ minimal stable map of multiplicity $k$ and $V$ be its image.

Then, the module of polynomial vector fields liftable over $\varphi _k$  is generated by the vector fields $\X_j^1, \X_j^2, \X_j^3$ for $1\le j\le k-1$, together with the Euler vector field $\X_e$.
\end{corollary}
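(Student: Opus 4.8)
The plan is to deduce this real statement directly from Theorem~\ref{thm:polygenerate} rather than to imitate the proof of Corollary~\ref{cor:generate}. The crucial observation is that Theorem~\ref{thm:polygenerate} is stated and proved over an arbitrary field $\K$: its proof rests only on the negative lexicographical division algorithm applied to the leading terms of $\X_e$ and the $\X_j^i$, together with a degree-counting argument in the variable $y$. Every coefficient appearing in those leading terms (the constants $k$ and $k^2$) is a nonzero integer, and the divisibility and degree arguments are purely combinatorial, so the whole proof goes through verbatim with $\K=\R$. Specialising Theorem~\ref{thm:polygenerate} to $\K=\R$ therefore already shows that any polynomial liftable $\X$ can be written as $g_e\X_e+\sum_{i,j}g_{i,j}\X_j^i$ with $g_e,g_{i,j}$ real polynomials; this is the inclusion of the module of real polynomial liftable fields into the submodule generated by our fields.

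For the reverse inclusion I would check that the proposed generators are themselves real polynomial liftable fields. The Euler field $\X_e$ is liftable over $\varphi_k$ by Proposition~\ref{prop:eulerlift}, whose proof only uses that $\varphi_k$ is quasihomogeneous, a property that holds equally over $\R$. For each $\X_j^i$, liftability is witnessed in the next section by an explicit lowering $\E$ satisfying $d\varphi_k\circ\E=\X_j^i\circ\varphi_k$; this is a polynomial identity in the coordinates with rational coefficients, and hence is valid over $\R$ as well as over $\C$. Since all these fields manifestly have polynomial components, they lie in the module of real polynomial liftable fields, and the two inclusions give the claimed generation.

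The point worth stressing is why the complex argument of Corollary~\ref{cor:generate} cannot simply be transcribed, and this is where the only real subtlety lies. That argument passes through $\derlog(V)$ and uses the equivalence between liftable fields and fields tangent to the image, together with the algorithm of \cite{hm} producing polynomial generators of $\derlog _0(V)$. Over $\R$ this route is unavailable: as the umbrella example in Section~\ref{sec:defs} shows, the image of $\varphi_k$ need not be a real-analytic set, so there is no clean ideal $I(V)$ to take $\derlog$ of, and the liftable/tangent equivalence fails. Phrasing the corollary in terms of the module of \emph{polynomial} liftable fields from the outset is precisely what sidesteps these analytic and geometric obstructions, reducing the statement to the field-independent algebra of Theorem~\ref{thm:polygenerate}. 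Consequently I expect no computational obstacle here; the only care needed is to confirm that the liftability identities for the generators are defined over $\R$, which the explicit lowerings guarantee.
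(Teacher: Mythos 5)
Your proposal is correct and takes essentially the same route as the paper: the paper's entire proof of Corollary~\ref{cor:realgenerate} is the single line ``This follows immediately from Theorem~\ref{thm:polygenerate}'', which is precisely your reduction, since that theorem is stated and proved over $\K=\R$ as well as $\C$. Your extra checks --- that the generators are themselves real polynomial liftable fields via Proposition~\ref{prop:eulerlift} and the explicit lowerable fields (which have integer coefficients), and the observation that the $\derlog(V)$ argument of Corollary~\ref{cor:generate} cannot be transcribed because the real image need not be analytic --- simply make explicit what the paper leaves implicit.
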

\begin{proof}
This follows immediately from Theorem~\ref{thm:polygenerate}
\end{proof}
For more general analytic real vector fields one can conjecture the following.
\begin{conjecture}
The vector fields $\X_j^1, \X_j^2, \X_j^3$ for $1\le j\le k-1$ generate $\derlog _0(V)$ where $V$ is the real part of the image of the complexification of $\varphi _k$.
\end{conjecture}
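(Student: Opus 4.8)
The plan is to reduce the statement to a single algebraic identity shared with Conjecture~\ref{conj:derlogv0}(i) and then to pass from polynomial to real-analytic coefficients by a flatness argument. Write $N=2k-1$ and view $\derlog _0(V)$ as the kernel of the $\EE _N$-linear map $\theta _N\to\EE _N$ sending $\xi $ to $\xi (h_V)=\sum_{l=1}^{N}\xi _l\,\partial h_V/\partial X_l$; equivalently, $\derlog _0(V)$ is the module of syzygies of the gradient of $h_V$. This presentation is what makes the change of coefficient ring transparent, since kernels commute with flat base change.

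The decisive step, and the one I expect to be the genuine obstacle, is to prove that $\xi ^f_j(h_V)=0$ for every $1\le f\le 3$, $1\le j\le k-1$ and all $k$; at present this is known only for $k\le 6$. I would attack it using the determinantal presentation $h_V=\det (M-W_2 I_k)$ furnished by the algorithm of Section~2.2 of \cite{mp} and already exploited in the proof of Corollary~\ref{cor:generate}, where $M$ has entries in $\underline{U},\underline{V},W_1$. Applying the derivation $\xi ^f_j$ together with the cofactor rule $\xi ^f_j(\det A)=\sum_{a,b}(\mathrm{adj}\,A)_{ba}\,\xi ^f_j(A_{ab})$ turns the desired vanishing into one polynomial identity in $\underline{U},\underline{V},W_1,W_2$ with integer coefficients, valid over $\R $ and $\C $ at once. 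The hope is that the combinatorial shape of the components $A^f,B^f,C^f$, the quasihomogeneity of $h_V$, and the conventions $U_{k-1}=V_k=0$, $U_k=1$ let one match the cofactor expansion term by term. This is precisely the content separating the already proved Corollary~\ref{cor:generate} from the still open Conjecture~\ref{conj:derlogv0}(i).

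Granting the identity, I would first treat the polynomial real case. A polynomial field $\xi $ with $\xi (h_V)=0$ is liftable over $\varphi _k$ in the real sense: over $\C $ it annihilates $h_V$, hence is tangent to the complex image, and since $\varphi _k$ is a stable complex map it is complex-liftable; because $\varphi _k$ and $\xi $ are real, averaging a complex lowerable field with its conjugate yields a real lowerable one. Theorem~\ref{thm:polygenerate} then gives $\xi =g_e\,\xi _e+\sum_{i,j}g_{i,j}\,\xi ^i_j$ with polynomial coefficients, and applying this to $h_V$, using $\xi _e(h_V)=k^2h_V$ from the proof of Corollary~\ref{cor:generate} together with $\xi ^i_j(h_V)=0$, forces $k^2 g_e\,h_V=0$, hence $g_e=0$. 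Thus the three families already generate the polynomial part of $\derlog _0(V)$.

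Finally I would upgrade to convergent power series. The ring of convergent real power series is flat, indeed faithfully flat, over the localised polynomial ring, since they share the same completion, so $\derlog _0(V)$ over the analytic ring is the base change of its polynomial counterpart; the polynomial generators $\xi ^f_j$ therefore remain generators after tensoring, which is exactly the real-analytic statement sought. Running the same method over $\C $ delivers Conjecture~\ref{conj:derlogv0}(i) as well, so both conjectures stand or fall on the single identity $\xi ^f_j(h_V)=0$.
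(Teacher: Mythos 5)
First, a point of orientation: the statement you set out to prove is one the paper itself leaves open --- it appears as a conjecture with no proof attached --- so there is no argument of the authors' to measure yours against, and your proposal does not close the gap either. By your own admission the decisive step, the identity $\xi^f_j(h_V)=0$ for all $1\le f\le 3$, $1\le j\le k-1$ and all $k$, is left as a ``hope'' that a cofactor expansion of $h_V=\det(M-W_2I_k)$ can be matched term by term; this identity carries essentially all of the content of the conjecture. It is exactly what the authors report verifying by machine only for $k\le 6$, and, as you yourself note, it is what separates the proved Corollary~\ref{cor:generate} from the open Conjecture~\ref{conj:derlogv0}(i). What you do have is a correct and genuinely useful \emph{conditional} reduction: granting the identity, your polynomial step is sound (a real polynomial $\xi$ with $\xi(h_V)=0$ is tangent to the complex image, hence complex-liftable by stability of $\varphi_k$, and conjugation-averaging $\eta\mapsto\tfrac12(\eta+\eta^*)$ with $\eta^*(x)=\overline{\eta(\bar x)}$ produces a real lowerable; then Theorem~\ref{thm:polygenerate} together with $\xi_e(h_V)=k^2h_V$ forces the Euler coefficient $g_e$ to vanish), and flat base change from the localized polynomial ring to the convergent power series ring legitimately transports generation of the syzygy module of the gradient to the real-analytic category. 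But reducing one open conjecture to another open identity, however plausible the identity, is not a proof of the statement.

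Second, even the conditional part contains an unflagged gap: you identify $\derlog_0(V)$ with the kernel of $\xi\mapsto\xi(h_V)$, i.e., you assume $I(V)=(h_V)$ in the ring of real-analytic germs. Here $V$ is the set of real points of the complex image, that is, the real zero locus of $h_V$, and the equality $I(V)=(h_V)$ is a real Nullstellensatz question: it requires $(h_V)$ to be a real ideal, for instance because $h_V$ changes sign across a codimension-one smooth locus of real points. The paper's own discussion in Section~\ref{sec:defs} of the umbrella --- where the real image and the real zero set of $W_2^2-V_1^2W_1$ differ by a handle --- shows that real phenomena of precisely this kind are the crux of the real case, so this identification must be argued, not assumed. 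It is very likely true, since the real image of $\varphi_k$ supplies smooth real points of $V$, but as written it is a second unproven step sitting underneath your flatness argument.
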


\section{Proofs of liftability}
The proofs in this section first appeared in \cite{danthesis}.
Proving that a vector field $\X $ is liftable involves writing down a lowerable vector field, i.e., an $\eta $ such that the equation $d\varphi _k \circ \eta = \X \circ \varphi _k $ holds. The liftable vector fields were originally found by calculating them for low values of $k$ via {\texttt{Singular}}, making a guess for the general form and subsequently making an educated case about what the lowerable vector field should be.

The lowerable will be of the form 
\begin{displaymath}\E_j^f=\lt(\begin{array}{c}
a_{1,j}^f\\
\vdots\\
a_{k-2,j}^f\\
b_{1,j}^f\\
\vdots\\
b_{k-1,j}^f\\
c^f
\end{array}\rt)\end{displaymath}
where the $a_i^f$, $b_i^f$ and $c^f$ are functions of the variables $y$ and $u_j$, for $1\leq j \leq k-2$, and $v_j$, for $1\leq j \leq k-2$.

The liftable vector field is of the form
\begin{displaymath}\X_j^f=\lt(\begin{array}{c}
A_{1,j}^f\\
\vdots\\
A_{k-2,j}^f\\
B_{1,j}^f\\
\vdots\\
B_{k-1,j}^f\\
C_1^f\\
C_2^f
\end{array}\rt)\end{displaymath}
where the $A_i^f$, $B_i^f$ and $C_i^f$ are functions of the variables $W_1$, $W_2$, $U_j$, for $1\leq j \leq k-2$, $V_j$, for $1\leq j \leq k-2$. Generally, we shall drop the reference to $f$ where this is suitable.

For the cross cap mapping 
\begin{eqnarray*}
&&\Gp _k(u_1,\ldots ,u_{k-2},v_1,\ldots ,v_{k-1},y)\\
&&\qquad\qquad\qquad\qquad=\lt(u_1,\ldots ,u_{k-2},v_1,\ldots ,v_{k-1},y^k+\sum_{i=1}^{k-2}u_iy^i,\sum_{i=1}^{k-1}v_iy^i\rt)
\end{eqnarray*}
we have the Jacobian matrix 
\begin{displaymath}J_{\Gp _k}=\lt(\begin{array}{c c c c c c c c c}
1 & 0 & \hdots & \hdots & \hdots & \hdots & \hdots & 0 & 0 \\
0 & 1 & \hdots & \hdots & \hdots & \hdots & \hdots & 0 & 0 \\
\vdots & \vdots & \ddots & & & & & \vdots & \vdots  \\
\vdots & \vdots & & \ddots & & & & \vdots & \vdots \\
\vdots & \vdots & & & \ddots & & & \vdots & \vdots \\
\vdots & \vdots & & & & \ddots & & \vdots & \vdots \\
\vdots & \vdots & & & & & \ddots & \vdots & \vdots \\
0 & 0 & \hdots & \hdots & \hdots & \hdots & \hdots & 1 & 0 \\
y & y^2 & \hdots & y^{k-2} & 0 & 0 & \hdots & 0 & \dwi \\
0 & 0 & \hdots & 0 & y & y^2 & \hdots & y^{k-1} & \dwii
\end{array}\rt).\end{displaymath}
From this we can see our equation for liftable vector fields, $d\varphi _k \circ \eta = \X \circ \varphi _k $, gives that $a_i=A_i$ for all $1\leq i \leq k-2$ and $b_i=B_i$ for all $1\leq i \leq k-1$ in the sense that $W_1$ and $W_2$ are functions of $y$ and the other coordinates of the codomain. In all our families this is obviously the case.

The same equation shows that we need to solve two equations to find a liftable and an associated lowerable:
\begin{eqnarray*}
C_1&=&\sum_{i=1}^{k-2}a_iy^i+c\fr{\pr W_1}{\pr y} , \\
C_2&=&\sum_{i=1}^{k-1}b_iy^i+c\fr{\pr W_2}{\pr y} .
\end{eqnarray*}
It is these two equations that we need to verify for each element in each of the three families.

%
%

\begin{lproof}{of Theorem~\ref{firstfamily}}
Instead of proving that $\X _j^1$ is liftable for each $1\leq j\leq k-1$ we shall subtract $(k-j)U_j\X _e$ from $\X _j^1$ to produce a new vector field with simpler entries and show that the new field is liftable.

For all $1\le j\le k-1$ we have
\[
\widetilde{\X }_j^1 = \X _j ^1 - (k-j)U_j\X _e=
\left(
\begin{array}{c}
\widetilde{A}_{1,j}^f\\
\vdots\\
\widetilde{A}_{k-2,j}^f\\
\widetilde{B}_{1,j}^f\\
\vdots\\
\widetilde{B}_{k-1,j}^f\\
\widetilde{C}_{1,j}^f\\
\widetilde{C}_{2,j}^f
\end{array}
\right) 
\]
where
\begin{eqnarray*}
\widetilde{A}_{i,j}^1&=&0,\qquad \text{with }1\le i\le k-2,\\
\widetilde{B}_{i,j}^1&=&k\sum_{r=1}^{i-1}U_{i+j-r}V_r-k\sum_{r=1}^iU_rV_{i+j-r}-(k-1)(k-j)U_jV_i\\
&&\qquad\qquad +\,kV_{i+j}W_1-kU_{i+j}W_2, \qquad \text{with }1\le i\le k-1,\\
\widetilde{C}_{1,j}^1&=&0\\
\widetilde{C}_{2,j}^1&=&-kV_jW_1-(k-1)(k-j)U_jW_2 .
\end{eqnarray*}
It may seem unusual to give the vector field in Theorem~\ref{firstfamily} and not this much simpler one. 
We state the more complicated version because of Conjecture~\ref{conj:derlogv0}(i): the three families generate $\derlog_0(V)$ in the complex analytic case. It is easy to check in low $k$ examples that these new vector fields are not in $\derlog _0(V)$.

For this particular family lowerable vector fields are given by
\begin{displaymath}\E_j^1=\lt(\begin{array}{c}
a_{1,j}^1\\
\vdots\\
a_{k-2,j}^1\\
b_{1,j}^1\\
\vdots\\
b_{k-1,j}^1\\
c_j^1
\end{array}\rt)\end{displaymath}
where
\begin{eqnarray*}
a_{i,j}^1&=&0 , \qquad \text{with }1\le i\le k-2,\\
b_{i,j}^1&=&k\sum_{r=1}^{i-1}u_{i+j-r}v_r-k\sum_{r=1}^iu_rv_{i+j-r}-(k-1)(k-j)u_jv_i+kv_{i+j}W_1-ku_{i+j}W_2\\
&\,&\qquad\qquad\qquad\qquad\qquad\qquad\qquad\qquad\qquad\qquad\qquad\quad\:\text{with }1\le i\le k-1,\\
c_j^1&=&0.
\end{eqnarray*}

With our $\E_j^1$ defined in this way, we will show that composing the Jacobian of our map $\Gp _k$ with $\E_j^1$ gives $\X_j^1$ composed with $\Gp _k$, therefore proving that $\X_j^1$ are liftable vector fields and that $\E_j^1$ are their corresponding lowerable vector fields. Now, in order to make our calculations easier we shall redefine our $W_1$ and $W_2$. We will have
\[W_1=\sum_{i=1}^ku_iy^i\qquad\mbox{and}\qquad W_2=\sum_{i=1}^kv_iy^i.\]
Indeed, this will assist us by adding some symmetry to the calculations, but in order for these definitions to make sense we must introduce extra `dummy' variables, namely $u_k=1$ and $u_{k-1}=v_k=0$. We will have $u_r=v_r=0$ for all $r\le0$ and for all $r>k$. 

It is quite clear, due to the 1's in the Jacobian that $a_{i,j}^1=A_{i,j}^1\circ \Gp _k$ and $b_{i,j}^1=B_{i,j}^1\circ \Gp _k$. This leaves us to show that our expressions for $C_{1,j}^1$ and $C_{2,j}^1$ are as stated. So, firstly, let us calculate $C_{1,j}^1$. 
This is easy as $a_{i,j}=0$ for all $i$ and $j$, and $c=0$, so $C_1=0$.

 Now let us consider $C_{2,j}^1$. We want to show that
\[C_{2,j}^1=-kV_jW_1+(k-j)U_jW_2.\]
To prove this we will start once again by composing the Jacobian with the appropriate terms in the lowerables. We have
\begin{eqnarray*}
&\,&\sum_{i=1}^{k-1}b_{i,j}^1y^i+c\,\dwii\\
&=&\sum_{i=1}^{k-1}b_{i,j}^1y^i\\
&=&\sum_{i=1}^{k-1}\lt(k\sum_{r=1}^{i-1}u_{i+j-r}v_r-k\sum_{r=1}^iu_rv_{i+j-r}-(k-1)(k-j)u_jv_i\rt. \\
&&\qquad\qquad\qquad\qquad\qquad\qquad\qquad+kv_{i+j}W_1-ku_{i+j}W_2\Bigg)y^i\\
&=&k\sum_{i=1}^{k-1}\sum_{r=1}^{i-1}\lt(u_{i+j-r}v_r-u_rv_{i+j-r}\rt)y^i-kv_j\sum_{i=1}^{k-1}u_iy^i+kW_1\sum_{i=1}^{k-1}v_{i+j}y^i\\
&&\qquad\qquad\quad-kW_2\sum_{i=1}^{k-1}u_{i+j}y^i-(k-j)(k-1)u_j\sum_{i=1}^{k-1}v_iy^i .
\end{eqnarray*}
The final term is equal to $-(k-j)(k-1)u_jW_2$ and is part of the claimed $C^1_{2,j}$.
The second term is almost equal to $-kv_jW_1$, which is equal to the other term in $C^1_{2,j}$, all we are missing is $-kv_jy^k$. Hence we are done if we can show the following holds:
\[
k\sum_{i=1}^{k-1}\sum_{r=1}^{i-1}\lt(u_{i+j-r}v_r-u_rv_{i+j-r}\rt)y^i
+kv_jy^k
+kW_1\sum_{i=1}^{k-1}v_{i+j}y^i
-kW_2\sum_{i=1}^{k-1}u_{i+j}y^i
=0.
\]

Let us rearrange the left-hand side and drop the factor $k$ which is now irrelevant. We have
\begin{eqnarray*}
&\,&\sum_{i=1}^{k-1}\sum_{r=1}^{i-1}\lt(u_{i+j-r}v_r-u_rv_{i+j-r}\rt)y^i+W_1\sum_{i=1}^{k-1}v_{i+j}y^i-W_2\sum_{i=1}^{k-1}u_{i+j}y^i+v_jy^k\\
&=&\sum_{i=1}^{k-1}\sum_{r=1}^{i-1}\lt(u_{i+j-r}v_r-u_rv_{i+j-r}\rt)y^i+\sum_{i=1}^{k-1}\sum_{r=1}^k\lt(v_{i+j}u_r-u_{i+j}v_r\rt)y^{i+r}+v_jy^k\\
&=&\sum_{i=1}^{k-1}\sum_{r=1}^{i-1}\lt(u_{i+j-r}v_r-u_rv_{i+j-r}\rt)y^i+\sum_{i=1}^k\sum_{r=1}^{k-1}\lt(v_{r+j}u_i-u_{r+j}v_i\rt)y^{i+r}+v_jy^k\\
&=&\sum_{i=1}^{k-1}\sum_{r=1}^{i-1}\lt(u_{i+j-r}v_r-u_rv_{i+j-r}\rt)y^i-\sum_{i=1}^k\sum_{r=1}^{k-1}\lt(u_{r+j}v_i-u_iv_{r+j}\rt)y^{i+r}+v_jy^k.
\end{eqnarray*}
To show that
\begin{equation}
\label{eq:eq1}
\sum_{i=1}^{k-1}\sum_{r=1}^{i-1}\lt(u_{i+j-r}v_r-u_rv_{i+j-r}\rt)y^i-\sum_{i=1}^k\sum_{r=1}^{k-1}\lt(u_{r+j}v_i-u_iv_{r+j}\rt)y^{i+r}+v_jy^k=0
\end{equation}
we consider coefficients of $y^p$ for different $p$. 

First suppose that $1\le p\le k-1$. Obviously only the double summations will yield coefficients of $y^p$ and the first of these gives 
\[
\sum_{r=1}^{p-1}\lt(u_{p+j-r}v_r-u_rv_{p+j-r}\rt).
\]
The second summation provides a $y^p$ term only if $i\leq p-i$. If we let $r=p-i$, then the summation yields the term
\[
-\sum_{i=1}^{p-1}\lt(u_{p-i+j}v_i-u_iv_{p-i+j}\rt)
\]
and obviously cancels with the previous summation.

Next, let us consider the particular case where $p=k$. Here, the first double summation in Equation~\ref{eq:eq1} yields no coefficient, as $i\leq k-1$. The second, however, does yield coefficients, as does the term $v_jy^k$. We have
\begin{eqnarray*}
&\,&-\sum_{r=1}^{k-1}\lt(u_{k+j-r}v_r-u_rv_{k+j-r}\rt)+v_j\\
&=&-\sum_{r=j}^{k-1}\lt(u_{k+j-r}v_r-u_rv_{k+j-r}\rt)+v_j\\
&\,&\quad\qquad\text{since for $r<j$, the subscripts of $u$ and $v$ are greater than $k$,}\\
&=&-\sum_{r=j+1}^{k-1}\lt(u_{k+j-r}v_r-u_rv_{k+j-r}\rt),
\text{since $u_k=1$ and $v_k$=0,}\\
&=&-\sum_{r=j+1}^{k-1}u_{k+j-r}v_r+\sum_{r=j+1}^{k-1}u_rv_{k+j-r}\\
&=&-\sum_{r=j+1}^{k-1}u_{k+j-r}v_r+\sum_{s=j+1}^{k-1}u_{k+j-s}v_s\\
&\,&\quad\qquad\text{letting $r=k+j-s$ in the second summation}\\
&=&0.
\end{eqnarray*}
Finally we look at the case where $p>k$. Neither the first double summation in Equation~\ref{eq:eq1} nor the last term yield any part of the required coefficient.

In the second double summation the greatest that $i$ can be is $k$, meaning the smallest $r$ can be is $p-k$. On the other hand, the greatest $r$ can be is $k-1$, meaning that the smallest $i$ can be is $p-k+1$. After substituting these limits into the double summation in Equation~\ref{eq:eq1} we actually find that $r$ can only be as large as $k-j$, since, if $r$ were any bigger, then the subscripts of $u$ and $v$ would be greater than $k$, and would yield zeros. This then limits $i$ to be no smaller than $j+p-k$. 
For these reasons, we find that the coefficient of $y^p$ is given by the summation
\begin{eqnarray*}
& &-\sum_{r=p-k}^{k-j}\lt(u_{j+r}v_{p-r}-u_{p-r}v_{j+r}\rt)\\
&=&-\sum_{r=p-k}^{k-j}u_rv_{p+j-r}+\sum_{r=p-k}^{k-j}u_{p+j-r}v_r\\
&=&-\sum_{r=p-k}^{k-j}u_rv_{p+j-r}+\sum_{s=p-k}^{k-j}u_sv_{p+j-s}, \\
& &\text{letting $s=p+j-r$ in the second summation, and changing the limits }\\
&=&0
\end{eqnarray*}
as wanted. 

Hence all coefficients of powers of $y$ in Equation~\ref{eq:eq1} are zero. This then completes the proof that the vector field $\widetilde{\X }^1_j $ is liftable for all $1\leq j \leq k-1$. Since $\X ^1_j$ is a sum of this and a multiple of the Euler vector field, which is liftable, it is liftable as claimed.
\end{lproof}

\begin{lproof}{of Theorems~\ref{secondfamily} and \ref{thirdfamily}}
We treat the second and third families together. One can certainly see that they are similar. 

In both families we need to solve the two equations. These equations are similar and so we introduce new variables and solve a single equation which can then be used to give solutions of both equations.

To this end, let 
\[
X=\sum_{i=1}^kx_iy^i\qquad\mbox{and}\qquad Z=\sum_{i=1}^kz_iy^i
\]
with $x_r=z_r=0$ for $r\le0$ and for $r>k$.

The concept here is that we can let the $x$ variables be $u$ variables or $v$ variables and similarly for $z$ variables. Thus by such a choice we can have $Z=W_1$ or $W_2$. Also, solutions to the equation 
\begin{equation}
\label{eqn:e1}
C=\sum _{i=1}^l \alpha _{i}y^i + c \dfrac{\partial Z }{\partial y } 
\end{equation}
where $C$ is a function of $x$, $z$, $X$ and $Z$,
will provide us with solutions to the two equations.

Thus if we can prove that for each $j$ the following is a solution of the equation we get solutions to the two equations: 
\begin{eqnarray*}
\alpha _{i,j}&=&-k(k+i-j+1)z_{k+i-j+1}X+k\sum_{r=1}^i(k+i-j-r+1)x_rz_{k+i-j-r+1}\\
& &-k\sum_{r=1}^{i+1}rx_{k+i-j-r+1}z_r+(k-j)(i+1)x_kx_{k-j}z_{i+1} \\
c_j&=&k\sum_{r=1}^jx_{k-j+r}y^r+x_{k-j}(k-(k-j)x_k) \\
C&=&k(k-j+1)z_{k-j+1}X+jx_{k-j}z_1+(k-j)(1-x_k)x_{k-j}z_1
\end{eqnarray*}
For the first equation we need $Z=W_1$ while for the second equation we need $Z=W_2$. To get the second family (and satisfy these equations) we take $X=W_1$ while the third family requires $X=W_2$.

Hence, to summarise, for $C_{1,j}^2$ we take $x=u$ and $z=u$, and for $C_{1,j}^3$ we have $x=v$, $z=u$. Meanwhile, for $C_{2,j}^2$ we have $x=u$ and $z=v$, whilst for $C_{2,j}^3$ we take $x=v$ and $z=v$. Simplification will then construct the vector fields specified in the theorem.

We shall now start to substitute our expressions into Equation~\ref{eqn:e1}, and evaluate accordingly. We want to prove
\begin{eqnarray*}
&&\sum_{i=1}^l\lt(-k(k+i-j+1)z_{k+i-j+1}X+g_i\rt)y^i\\
&&\qquad\qquad +\,\lt(k\sum_{r=1}^jx_{k-j+r}y^r+jx_{k-j}+(k-j)(1-x_k)x_{k-j}\rt)\fr{\pr Z}{\pr y}\\
&&\qquad\qquad\qquad\qquad =\,k(k-j+1)z_{k-j+1}X+jx_{k-j}z_1+(k-j)(1-x_k)x_{k-j}
\end{eqnarray*}
where
\begin{eqnarray*}
g_i&=&k\sum_{r=1}^i(k+i-j-r+1)x_rz_{k+i-j-r+1}-k\sum_{r=1}^irx_{k+i-j-r+1}z_r\\
&-&k(i+1)x_{k-j}z_{i+1}+(k-j)(i+1)x_kx_{k-j}z_{i+1}.
\end{eqnarray*}
Thus, we have
\begin{eqnarray*}
&&-\,k\sum_{i=1}^l(k+i-j+1)z_{k+i-j+1}Xy^i+\sum_{i=1}^lg_iy^i+k\sum_{r=1}^jx_{k-j+r}y^r\fr{\pr Z}{\pr y}\\
&&+\,jx_{k-j}\fr{\pr Z}{\pr y}+(k-j)(1-x_k)x_{k-j}\fr{\pr Z}{\pr y}\\
&=&k(k-j+1)z_{k-j+1}X+jx_{k-j}z_1+(k-j)(1-x_k)x_{k-j}.
\end{eqnarray*}
Now, let us simplify one of the terms on the left hand side of this equation. We have
\begin{eqnarray*}
k\sum_{r=1}^jx_{k-j+r}y^r\fr{\pr Z}{\pr y}&=&k\sum_{r=1}^jx_{k-j+r}y^r\sum_{i=1}^kiz_iy^{i-1}\\
&=&k\sum_{r=2}^lx_{k-j+r}y^r\sum_{i=1}^kiz_iy^{i-1}+kx_{k-j+1}y\sum_{i=1}^kiz_iy^{i-1}\\
&=&k\sum_{r=1}^lx_{k-j+r+1}y^{r+1}\sum_{i=1}^kiz_iy^{i-1}+kx_{k-j+1}\sum_{i=1}^kiz_iy^i\\
&=&k\sum_{i=1}^l\sum_{r=1}^krx_{k+i-j+1}y^{i+r}+kx_{k-j+1}\sum_{i=1}^kiz_iy^i.
\end{eqnarray*}
Using this, and substituting in our summations for $X$ and $Z$ then leaves us to show
\begin{eqnarray}
\label{eqn:e2}
&&-\:k\sum_{i=1}^l\sum_{r=1}^k(k+i-j+1)x_rz_{k+i-j+1}y^{i+r}+k\sum_{i=1}^l\sum_{r=1}^krx_{k+i-j+1}z_ry^{i+r}\nonumber\\
&&+\:kx_{k-j+1}\sum_{i=1}^kiz_iy^i+\sum_{i=1}^lg_iy^i+jx_{k-j}\sum_{i=1}^kiz_iy^{i-1}\nonumber\\
&&+\:(k-j)(1-x_k)x_{k-j}\sum_{i=1}^kiz_iy^{i-1}\nonumber\\
&=&k(k-j+1)z_{k-j+1}\sum_{i=1}^kx_iy^i+jx_{k-j}z_1+(k-j)(1-x_k)x_{k-j}.
\end{eqnarray}

To show this equation holds, we shall consider different cases. Firstly, we shall consider powers of $y$ strictly bigger than $k$. On the right hand side of Equation~\ref{eqn:e2}, it is clear that the highest power of $y$ is $k$, so we need to show that all coefficients of powers of $y$ bigger than $k$ on the left hand side of this equation simplify to equal zero. To do this, we shall compare coefficients of $y^{k+q}$, where $q\in\N$. The only terms on the left hand side of Equation~\ref{eqn:e2} that yield powers of $y$ bigger than $k$ are
\[-k\sum_{i=1}^l\sum_{r=1}^k(k+i-j+1)x_rz_{k+i-j+1}y^{i+r}+k\sum_{i=1}^l\sum_{r=1}^krx_{k+i-j+1}z_ry^{i+r}.\]
The highest value $i$ can take is $l$, in which case $r=k+q-l$, since the power $i+r$ must equal $k+q$. If we then substitute $i=l$ and $r=k+q-l$ into the two double summations, we get
\[-k(k-j+l+1)x_{k+q-l}z_{k-j+l+1}+k(k+q-l)x_{k-j+l+1}z_{k+q-l}.\]
The second highest value that $i$ can take is $l-1$, in which case $r=k+q-l+1$. Substituting these values in gives
\[-k(k-j+l)x_{k+q-l+1}z_{k-j+l}+k(k+q-l+1)x_{k-j+l}z_{k+q-l+1}.\]
We can continue reducing $i$ and adjusting $r$ accordingly, so long as $i+r=k+q$. This will continue yielding terms, but we must stop when $r$ reaches $k$ as this is the upper limit of $r$ in each double summation. In that case, $i$ clearly takes the value $q$, and substituting $i=q$ and $r=k$ into the double summations will produce our final term, namely
\[-k(k-j+q+1)x_kz_{k-j+q+1}+k^2x_{k-j+q+1}z_k.\]
If we then take the sum of all these terms, we have
\[-k\sum_{r=k-l+q}^k(2k-j-r+q+1)x_rz_{2k-j-r+q+1}+k\sum_{r=k-l+q}^krx_{2k-j-r+q+1}z_r.\]
Now, if we let $t=2k-j-r+q+1$ in the second summation and adjust the limits accordingly we have
\[-k\sum_{r=k-l+q}^k(2k-j-r+q+1)x_rz_{2k-j-r+q+1}+k\sum_{t=k-j+q+1}^{k-j+l+1}(2k-j-t+q+1)x_tz_{2k-j-t+q+1}.\]
Note that if $r<k-j+q+1$ then $2k-j-r+q+1>2k-j-k+j-q-1+q+1=k$, hence the lower limit of $r$ in the first summation must be $k-j+q+1$, since if $r$ were any lower, then the subscript of $z$ would be bigger than $k$, and these terms would disappear due to the dummy variables. Similarly, the upper limit of $r$ in the second summation must be $k$, else the subscript of $x$ would be bigger than $k$. Thus, we are left with
\begin{eqnarray*}
&&-\:k\sum_{r=k-j+q+1}^k(2k-j-r+q+1)x_rz_{2k-j-r+q+1}\\
&&\qquad\qquad\qquad\qquad+\:k\sum_{t=k-j+q+1}^k(2k-j-t+q+1x_tz_{2k-j-t+q+1}=0
\end{eqnarray*}
as wanted. Hence all coefficients of powers of $y$ bigger than $k$ simplify to equal zero.

We will now consider the powers of $y$ equal to or less than $k$. We must show that coefficients of a general power of $y$, say $y^p$, with $1\le p\le k$ are equal on both sides of Equation~\ref{eqn:e2}. On the right hand side, the only coefficient of $y^p$ is
\[k(k-j+1)x_pz_{k-j+1}.\]
So we want to show that all coefficients of $y^p$ on the left hand side of Equation~\ref{eqn:e2} simplify to give this expression. Comparing coefficients of $y^p$ in the double summations we have
\begin{eqnarray*}
&&-\:k\sum_{r=1}^{p-1}(k-j+p-r+1)x_rz_{k-j+p-r+1}+\sum_{r=1}^{p-1}rx_{k-j+p-r+1}z_r+kpx_{k-j+1}z_p\\
&&+\:g_p+j(p+1)x_{k-j}z_{p+1}+(k-j)(p+1)(1-x_k)x_{k-j}z_{p+1}\\
&=&-k\sum_{r=1}^{p-1}(k-j+p-r+1)x_rz_{k-j+p-r+1}+\sum_{r=1}^{p-1}rx_{k-j+p-r+1}z_r+kpx_{k-j+1}z_p\\
&&+\:k\sum_{r=1}^p(k-j+p-r+1)x_rz_{k-j+p-r+1}-k\sum_{r=1}^prx_{k-j+p-r+1}z_r\\
&&-\:k(p+1)x_{k-j}z_{p+1}+(k-j)(p+1)x_kx_{k-j}z_{p+1}\\
&&+\:j(p+1)x_{k-j}z_{p+1}+(k-j)(p+1)(1-x_k)x_{k-j}z_{p+1}\\
&=&-\:k\sum_{r=1}^{p-1}(k-j+p-r+1)x_rz_{k-j+p-r+1}+\sum_{r=1}^{p-1}rx_{k-j+p-r+1}z_r+kpx_{k-j+1}z_p\\
&&+\:k\sum_{r=1}^{p-1}(k-j+p-r+1)x_rz_{k-j+p-r+1}-k\sum_{r=1}^{p-1}rx_{k-j+p-r+1}z_r\\
&&+\:k(k-j+1)x_pz_{k-j+1}-kpx_{k-j+1}z_p-k(p+1)x_{k-j}z_{p+1}\\
&&+\:(k-j)(p+1)x_kx_{k-j}z_{p+1}+j(p+1)x_{k-j}z_{p+1}\\
&&+\:(k-j)(p+1)(1-x_k)x_{k-j}z_{p+1}\\
&=&kpx_{k-j+1}z_p+k(k-j+1)x_pz_{k-j+1}-kpx_{k-j+1}z_p-k(p+1)x_{k-j}z_{p+1}\\
&&+\:(k-j)(p+1)x_kx_{k-j}z_{p+1}+j(p+1)x_{k-j}z_{p+1}\\
&&+\:(k-j)(p+1)(1-x_k)x_{k-j}z_{p+1}\\
&=&kpx_{k-j+1}z_p+k(k-j+1)x_pz_{k-j+1}-kpx_{k-j+1}z_p\\
&=&k(k-j+1)x_pz_{k-j+1}
\end{eqnarray*}
as wanted. Hence all coefficients of positive powers of $y$ equal to or less than $k$ simplify to give the same expression on both sides of Equation~\ref{eqn:e2}.

The last thing that we need to mention to complete the proof that Equation~\ref{eqn:e2} holds is that the constant terms clearly equate. 
\end{lproof}

%
%

\section{An application to classification of maps}
\label{sec:appl}
In this final section we shall apply the vector fields liftable over the cross cap of multiplicity $k$ to the investigation of mappings that occur generically in one-parameter families of mappings, in particular, maps of $\AA _e$-codimension $1$:
\begin{definition}
\label{defn:aecod}
Suppose that $f:(\K ^n,0)\to (\K ^p,0)$ is a smooth mapping. 
The {\em{$\AA _e$-codimension of $f$}}, denoted $\AA _e-\cod(f)$, is defined to be
 \[
 \AA _e-\cod(f) = \dim _\K \dfrac{\theta (f)}{tf(\theta _n) + wf (\theta _p) } .
 \]
 \end{definition}
We can see that, by definition, $f$ is stable if and only if the $\AA _e$-codimension of $f$ is $0$. A natural task is to find maps with $\AA _e$-codimension $1$. We can do this by relating $\AA _e$-codimension of $f$ to the codimension of a different map.

\begin{definition}[\cite{damoniii,wikatique} and cf.~\cite{br}]
Suppose that $F:(\K^{n'},0)\to(\K^{p'},0)$ is a stable map and $h:(\K^{p'},0)\to(\K^{q},0)$ is an analytic map. Let $V$ be the $\K$-part of the discriminant of the complexification of $F$. 

The {\em{$_V\KK _e$-codimension of $h$}}, denoted $_V\KK _e-\cod (h)$, is defined to be
\[
_V\KK _e-\cod (h)= \dim _\K  \dfrac{\theta (h)}{ \langle \xi (h) \, : \,  \xi \in \derlog(V) \rangle + \langle h_1e_j, h_2e_j , \dots , h_q e_j \rangle _{j=1}^q  } 
\]
where $h=(h_1,h_2, \dots , h_q)$ and, as before,
$e_j =(0,0,\dots, 0,1,0,\dots ,0)^T\in \K ^q$ which has zeroes except at position $j$, where it has a $1$. 
\end{definition}

\begin{example}
\label{eg:vkwu}
Let $F:(\K ^3,0)\to (\K ^4,0)$ be the trivial extension of the Whitney umbrella given by $F(x,v_1,y)=(x,v_1,y^2,v_1y)$. 
The $\K$-part of the image of the complexification of  $F$, denoted $V$, is defined in $(\K ^4,0)$, with coordinates $(X,V_1,W_1,W_2)$,  by $W_2^2-V_1^2W_1=0$. It is easy to calculate that $\derlog (V)$ is generated by the vector fields from Example~\ref{eg:wulift} and the trivial vector field $\partial /\partial X$. (In the case of complex analytic maps we have proved this in Corollary~\ref{cor:generate} but it can be proved for real analytic maps, see \cite{west}.)

Let $h(X,V_1,W_1,W_2)=V_1-p(X,W_1)$.
Then we have
\begin{eqnarray*}
_V \KK _e-\cod (h) &=& \dim _\K \frac{\EE _4 }{\langle W_2, -V_1 +2 W_1 \frac{\partial p}{\partial W_1} , -2W_2 \frac{\partial p}{\partial W_2} ,  V_1 +2 W_1 \frac{\partial p}{\partial W_1} ,\frac{\partial p}{\partial X} \rangle + \langle V_1 - p \rangle } \\
&=& \dim _\K \frac{\EE _4 }{\langle W_2, V_1 , W_1 \frac{\partial p}{\partial W_1} ,\frac{\partial p}{\partial X} , V_1 - p \rangle } \\
&=& \dim _\K \frac{\EE _2 }{\langle W_1 \frac{\partial p}{\partial W_1} ,\frac{\partial p}{\partial X} , p \rangle }.
\end{eqnarray*}
In Exercise~1.1 of \cite{damonwark} there is a similar calculation for the related notion of $\KK _V $-codimension. Note that the above calculation is considerably simpler since the denominator is an ideal rather than a module. 
\end{example}

The two codimensions, $_V \KK $-codimension and $\AA _e$-codimension are intimately connected.

\begin{definition}
Let $F:(\K ^n,0)\to (\K ^p,0)$ be a smooth map and $g:(\K ^r,0)\to (\K ^p,0)$ an immersion which is transverse to $F$, i.e., $dF(T_0(\K ^n,0))+T_0(g(\K^r,0))) =T_0(\K ^p,0)$. The {\em{pullback of $F$ by $g$}}, denoted $g^*(F)$, is the natural map from
\[
(\K ^{r-(p-n)} ,0) \cong \{(x,y)\in (\K^n ,0)\times (\K^r,0) \, : \, F(x)=g(y)\}
\]
to $(\K ^r ,0)$ given by projection on the second factor.
\end{definition}
If $f:(\K ^n,0)\to (\K ^p,0)$ has finite $\AA _e$-codimension, then $f$ can be induced as a pull-pack from a stable map $F$ by an immersion $g$, see \cite{wall}.

The connection between $\AA _e$-codimension and $_V\KK_e$-codimension was given in \cite{damonwark}, Theorem~1, (see also \cite{MM}) and \cite{damoniii}, Lemma~6.2:
\begin{theorem}[\cite{damonwark,MM,damoniii}]
\label{thm:damon}
Suppose that $F:(\K ^n,0)\to (\K ^p,0)$ is a $\K $-analytic stable map and $g:(\K ^r,0)\to (\K ^p,0)$ is an immersion transverse to $F$.
Then, 
\[
\AA _e - \cod (g^*(F) ) = _V\KK _e -\cod (h) 
\]
where $h$ is a submersion $h:(\K ^p,0)\to (\K ^{p-r} ,0)$ such that $h^{-1}(0)$ is the image of $g$ and $V$ is the $\K $-part of the complexification of the discriminant of $F$. 
\end{theorem}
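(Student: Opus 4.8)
The plan is to prove the equality of the two codimensions by exhibiting a $\K$-linear isomorphism between the underlying normal spaces. Writing $f=g^*(F)$, the left-hand side is the dimension of $T^1_{\AA_e}(f)=\theta(f)/(tf(\theta_n)+wf(\theta_p))$, while the right-hand side is the dimension of the quotient $\theta(h)/(\langle \xi(h):\xi\in\derlog(V)\rangle+\langle h_ie_j\rangle)$. Since both are dimensions of quotient $\K$-vector spaces, it suffices to construct a map between them and to show it is bijective; in fact I would build a morphism of the relevant modules that descends to the quotients, and the module structure will be transported along the submersion $h$.

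The geometric input is that, because $F$ is stable and $g$ is transverse to $F$, every first-order deformation of $f$ up to $\AA$-equivalence is realised by moving the transverse section $g$. Since the image of $g$ equals $h^{-1}(0)$, moving $g$ is the same as deforming the submersion $h$, and is therefore recorded by an element of $\theta(h)$. This yields a natural assignment $\theta(h)\to T^1_{\AA_e}(f)$: a deformation $\dot h\in\theta(h)$ displaces $h^{-1}(0)$, hence displaces $g$, hence induces a deformation of the pullback $f$, from which one reads off the corresponding class in $\theta(f)$ modulo trivial deformations. The first step is to write this assignment down explicitly and check that it respects the module structures.

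The heart of the argument is to match the two submodules of trivial deformations. The liftable piece $\langle \xi(h):\xi\in\derlog(V)\rangle$ is handled by liftability: for the stable map $F$ the vector fields tangent to the discriminant $V$ coincide with those liftable over $F$ (this is the equivalence recorded in Section~\ref{sec:defs}, with explicit generators supplied by Corollary~\ref{cor:generate}), so each $\xi\in\derlog(V)$ admits a lowerable $\E$ with $dF\circ\E=\xi\circ F$. This $\E$ furnishes a source vector field witnessing that the deformation of $f$ attached to $\xi(h)$ lies in $tf(\theta_n)$, so $\xi(h)$ maps to zero. The contact piece $\langle h_ie_j\rangle$ accounts for the remaining freedom, namely target reparametrisations feeding $wf(\theta_p)$; jointly the two pieces map the $_V\KK_e$-tangent space into $tf(\theta_n)+wf(\theta_p)$, so the construction descends to a well-defined map of quotients.

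Finally I would verify bijectivity. Surjectivity is precisely the geometric fact above, that all $\AA$-deformations of $f$ arise by moving $g$. Injectivity amounts to the converse — that a deformation of $h$ whose induced deformation of $f$ is $\AA_e$-trivial must already be $_V\KK_e$-trivial — and this is the main obstacle, since it requires recovering a $\KK_V$-triviality of the section from an $\AA$-triviality of the pullback. Here one genuinely needs the full strength of Damon's theory of geometric subgroups of $\AA$, together with the transversality of $g$ and the stability of $F$ (equivalently the versality of its unfolding, which controls the deformation theory through $V$). As the statement is established in \cite{damonwark,MM,damoniii}, I would follow those references for this implication; the contribution relevant to the present paper is not the theorem itself but the explicit description of $\derlog(V)$ in Corollary~\ref{cor:generate}, which makes the liftable piece above, and hence computations such as Example~\ref{eg:vkwu}, entirely effective.
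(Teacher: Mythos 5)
The paper offers no proof of this statement at all: it is imported as background, with the proof attributed to \cite{damonwark} (Theorem~1), \cite{MM}, and \cite{damoniii} (Lemma~6.2), and is then used as a black box in Section~\ref{sec:appl}. Your proposal — a sketch of the normal-space comparison that explicitly defers the decisive step (injectivity, i.e.\ recovering $_V\KK_e$-triviality from $\AA_e$-triviality via Damon's theory of geometric subgroups) to those same references — is therefore in line with the paper's own treatment, and your outline of how the cited proof goes (deformations of $f=g^*(F)$ correspond to deformations of the section $h$, with $\derlog(V)$ and the ideal $\langle h_i e_j\rangle$ matching the trivial deformations $tf(\theta_n)+wf(\theta_p)$) is a fair description of the mechanism.

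Two caveats on the sketch. First, the identification of $\derlog(V)$ with the vector fields liftable over the stable map $F$ holds over $\C$ only; the paper is explicit that it fails for real analytic maps (the image need not even be a real analytic set), which is precisely why the theorem is phrased in terms of the $\K$-part of the \emph{complexification} of the discriminant. So the ``liftable piece'' of your argument must be run through the complexification when $\K=\R$, not applied directly. Second, Corollary~\ref{cor:generate} concerns the specific maps $\varphi_k$, not a general stable $F$, so it cannot supply the generators of $\derlog(V)$ in the generality of this theorem; it is relevant to the applications that follow (Theorem~\ref{thm:mainlinear}, Corollary~\ref{cor:aecod1}), where it does indeed make the computation of $_V\KK_e$-codimension effective, but it plays no role in proving the statement itself.
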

The importance of this theorem is that $_V\KK_e$-codimension is easier to calculate than $\AA _e$-codimension. The denominator in the definition of the latter is not a module whereas it is for the former. This makes calculation easier.

\begin{example}
Consider Example~\ref{eg:vkwu}. If we let $g:(\K ^3,0)\to (\K ^4,0)$ be given by $g(X,W_1,W_2)=(X,p(X,W_1),W_1,W_2)$, then the image of $g$ is equal to $h^{-1}(0)$ and, one calculates, $g$ is transverse to $F$. The pull-back of $F$ by $g$ is a map of the form $f(x,y)=(x,y^2,yp(x,y^2))$. 

Using the example and the previous theorem we calculate that
\[
\AA _e-cod (f) = \dim _\K \frac{\EE _2 }{\langle y \frac{\partial p}{\partial y} ,\frac{\partial p}{\partial x} , p \rangle } 
\]
which is the formula found in \cite{mondr2tor3}. However, it can be seen that the work in calculating the $\AA _e$-codimension of the map $f$ is greatly reduced with this method. 
\end{example}

Let us now look at linear maps $h:(\K ^{2k-1},0)\to (\K ,0)$ on the image $V$ of the minimal cross cap mapping of multiplicity $k$ and find those with $_V\KK _e$-codimension equal to $1$. This allows us to find $\AA _e$-codimension $1$ maps because we can parametrize the zero-set of $h$ by a map $g$ and then the pull-back of the cross cap by $g$ will have $\AA _e$-codimension~$1$.

As usual let $(\underline{U}, \underline{V},\underline{W})=(U_1,\ldots,U_{k-2},V_1,\ldots,V_{k-1},W_1,W_2)$ denote the coordinates on the codomain of the minimal cross cap of multiplicity $k$. Now consider a general linear function $h$ on $\K ^{2k-1}$:
 \[
h(U,V,W)=\sum_{i=1}^{k-2}\Ga_iU_i+\sum_{i=1}^{k-1}\Gb_iV_i+\Gg_1W_1+\Gg_2W_2,
\]
where the coefficients $\alpha $, $\beta $ and $\gamma $ are elements of $\K $.

For $_V \KK _e$-codimension $1$ functions the denominator in the definition of $_V \KK $-codimension is equal to the maximal ideal, denoted $\M $, in $\theta (h)= \EE _{2k-1}$. In the following we work modulo $\M ^2$ as later we shall simplify calculations by using Nakayama's Lemma.

A different version of the following was first proved in Section~4.4 of \cite{danthesis}.
\begin{theorem}
\label{thm:modulom}
Let $h:(\K ^{2k-1},0)\to (\K ,0)$ be the linear map above and $\xi ^i_j$, where $1\leq i\leq 3$ and $1\leq j \leq k-1$, denote the vector fields liftable over $\varphi _k$ in Theorems~\ref{firstfamily}, \ref{secondfamily} and \ref{thirdfamily}.

Then,
\begin{eqnarray*}
\X_j^1(h)&=&k \left( - \Gb_{k-j}W_2 + \sum_{i=k-j+1}^{k-1}\Gb_iV_{i+j-k}  \right) \mod \M^2, \\
\X_j^2(h)&=&\sum_{i=j-1}^{k-2}\Ga _i (k-i+j-1)U_{i-j+1} -k \sum_{i=j-1}^{k-1} (i-j+1) \Gb_{i}V_{i-j+1} \\
& &\qquad -k(k-j+1) \Gg_1 U_{k-j+1}W_1-k^2\Ga_{j-1}W_1 \mod \M^2, \\
\X_j^3(h)&=& k(k-j+1) \Gg_1U_{k+j-1}W_2-k^2\Ga_{j-1}W_2  + k^2 \sum_{i=j}^{k-2}\Ga_i V_{i-j+1} \mod \M^2,
\end{eqnarray*}
for all $1\leq j\leq k-1$.
\end{theorem}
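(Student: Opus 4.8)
The plan is to exploit that $h$ is linear. First I would record that for any target vector field $\X=(A_1,\dots,A_{k-2},B_1,\dots,B_{k-1},C_1,C_2)^T$, the function $h(U,V,W)=\sum_i\Ga_iU_i+\sum_i\Gb_iV_i+\Gg_1W_1+\Gg_2W_2$ has constant partial derivatives, so applying $\X$ to $h$ is simply the pairing of the components of $\X$ with the coefficient vector:
\[
\X(h)=\sum_{i=1}^{k-2}\Ga_iA_i+\sum_{i=1}^{k-1}\Gb_iB_i+\Gg_1C_1+\Gg_2C_2.
\]
For each of the three families I would substitute the explicit components from Theorems~\ref{firstfamily}, \ref{secondfamily} and \ref{thirdfamily} into this expression.

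The second, and decisive, observation is that reducing modulo $\M^2$ means retaining only the part of $\X(h)$ that is linear in the genuine target variables $U_1,\dots,U_{k-2},V_1,\dots,V_{k-1},W_1,W_2$. Here the dummy-variable convention $U_k=1$ (with $U_{k-1}=V_k=0$, and $U_r=V_r=0$ for $r\le 0$ or $r>k$) is essential: a monomial that looks quadratic is in fact linear precisely when one of its two factors is the constant $U_k=1$, and it vanishes when a factor is a zero dummy. Thus the surviving linear terms in each component are exactly those monomials with a single genuine-variable factor, arising either as a genuine linear monomial or from a product whose other factor is $U_k$. Operationally, in each summand one sets the index that forces the relevant subscript equal to $k$ (for instance $U_{k+i-j-r+1}=U_k$ forces $r=i-j+1$), reads off the resulting coefficient, and checks that the forced index lies in the admissible range; every product of two genuine variables is discarded.

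Carrying this out family by family is then mechanical. For the first family the $A$-components are $\Ga$-weighted products $U_iU_j$ and the $C$-components are the products $U_jW_1,V_jW_1,U_jW_2$, all genuinely quadratic, so only the $B$-components survive; extracting from $B^1_{i,j}$ the term with $U_{i+j-r}=U_k$ and the term with $U_{i+j}=U_k$ and reindexing gives exactly $k(-\Gb_{k-j}W_2+\sum_{i=k-j+1}^{k-1}\Gb_iV_{i+j-k})$. For the second and third families one proceeds identically, now with contributions from $A$, $B$ and the $C_1$-component (the $C_2$-components reduce to $0$ mod $\M^2$, which is why no $\Gg_2$ term appears): the linear parts come from forcing $U_k=1$ inside each single or double sum, boundary contributions such as $i=j-1$ vanish through $U_0=0$, and the residual single sums relabel to the stated ranges. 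The $\Gg_1$ and $\Ga_{j-1}$ contributions are simply packaged formally via the dummy subscripts $U_{k-j+1}$, $U_{k+j-1}$ and $U_{k+i-j+1}$, each of which equals $U_k$ only at the relevant value of $j$ (or $i$) and is otherwise a zero dummy.

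The main obstacle is purely the index bookkeeping in the second and third families: for each double sum one must track, after the substitution $U_k=1$, which values of the summation variable keep all subscripts inside $[1,k]$, and then relabel so that the leftover single sums align with the ranges $\sum_{i=j-1}^{k-2}$ and $\sum_{i=j}^{k-2}$ of the statement. There is no conceptual content and no appeal to Nakayama's Lemma or the division algorithm is needed at this stage; the only genuine care is to verify that no linear monomial (in particular none promoted from quadratic by the factor $U_k$) is overlooked and that every honestly quadratic monomial is correctly dropped modulo $\M^2$.
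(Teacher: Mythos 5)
Your proposal is correct and takes essentially the same approach as the paper: exploit the linearity of $h$ so that $\X(h)$ is the pairing of the components with the coefficients $\Ga_i,\Gb_i,\Gg_1,\Gg_2$, then reduce mod $\M^2$ by keeping exactly those monomials in which one factor is the dummy $U_k=1$ (discarding genuinely quadratic terms and zero dummies), and finish with the index bookkeeping. The only minor imprecision is that in the third family the $B$-components contain no $U$'s at all and hence reduce to $0$ mod $\M^2$, so the surviving contributions there come from $A$ and $C_1$ alone --- which is exactly what your own prescription yields.
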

\begin{proof}
For all three families we can ignore any quadratic terms in the description of the vector field. Note that this means we only have to look at terms that include $U_k$.

For $\xi ^1_j$ we have that $A^1_{i,j}$, $C^1_{1,j} $ and $C^1_{2,j}$ are all in $\M ^2$ and hence 
\[
A^1_{i,j} = C^1_{1,j} = C^1_{2,j} = 0 \mod \M ^2 .
\]
For $B^1_{i,j}$ the second summation and the second and third to last terms are in $\M ^2$ and so 
\[
B^1_{i,j} = k \sum _{r=1}^{i-1} U_{i+j-r} V_r - k U_{i+j} W_2 \mod \M ^2 .
\] 
Therefore we have
\[
\xi ^1_j (h) = \sum _{i=1}^{k-1} \beta _i \left( k \sum _{r=1}^{i-1} U_{i+j-r} V_r - k U_{i+j} W_2 \right) \mod \M ^2.
\]
Now, $U_{i+j-r} = 1$ if $i+j-r=k$, i.e., $r=i+j-k$, and similarly $U_{i+j} =1$ if $i=k-j$. Thus 
\[
\xi ^1_j (h) = k \sum _{i=1}^{k-1} \beta _i V_{i+j-k}  - k \beta _{k-j}  W_2 \mod \M ^2.
\]
Now, if $i\leq k-j$, then $V_{i+j-k}=0$ and so we can replace the lower limit in the summation by $k-j+1$.

For the second family, $\xi ^2_j$, we note that 
\begin{eqnarray*}
A^2_{i,j} &=& - k(k+i-j+1) U_{k+i-j+1} W_1 + k \sum_{r=1}^i (k+1-j-2r+1) U_r U_{k+i-j-r+1} \mod \M ^2 , \\
B^2_{i,j} &=& -k \sum _{r=1}^i r U_{k+i-j-r+1}V_r \mod \M ^2, \\
C^2_{1,j} &=& k (k-j+1)U_{k-j+1}W_1 \mod \M ^2, \\
C^2_{2,j} &=& 0 \mod \M ^2 .
\end{eqnarray*}
Similar reasoning as for $\xi ^1_j(h)$ involving $U_k=1$ gives the required form of $\xi ^2_j(h)$. (This time we use $i\leq j-1$ to get that $U_{i-j+1}=V_{i-j+1}=0$ and hence can alter the lower limit for the summations.)

For $\xi ^3_j$ we have
\begin{eqnarray*}
A^3_{i,j} &=& - (k+i-j+1) U_{k+i-j+1} W_2 + k^2 V_{i-j+1} \mod \M ^2 ,\\
B^3_{i,j} &=& 0 \mod \M ^2 ,\\
C^3_{1,j} &=& k(k-j+1) U_{k-j+1} W_2 \mod \M ^2 , \\
C^3_{2,j} &=& 0 \mod \M ^2 .
\end{eqnarray*}
From this we can deduce the stated form of $\xi ^3_j(h)$. 
\end{proof}

The following is Theorem~4.4.7 of \cite{danthesis}.
\begin{theorem}
\label{thm:mainlinear}
For $k>2$ suppose that $\alpha _{k-2}$ and $\beta _{k-1}$ are non-zero. Then,
\[
\langle \X ^1_j (h) ,  \X ^2_j (h),  \X ^3_j (h) , h \rangle _{j=1}^{k-1} = \M .
\]
\end{theorem}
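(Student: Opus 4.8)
The plan is to reduce the statement to a finite-dimensional linear algebra problem via Nakayama's Lemma. Since each generator $\X_j^i(h)$ and $h$ lies in $\M$ (each is a linear form with no constant term, as exhibited in Theorem~\ref{thm:modulom}), the ideal $I=\langle \X_j^1(h),\X_j^2(h),\X_j^3(h),h\rangle_{j=1}^{k-1}$ is contained in $\M$. Conversely, because $\EE_{2k-1}$ is Noetherian and local, it suffices to prove $I+\M^2=\M$, i.e.\ that the images of the generators span the $(2k-1)$-dimensional vector space $\M/\M^2$ with basis $U_1,\dots,U_{k-2},V_1,\dots,V_{k-1},W_1,W_2$; then $\M/I=\M(\M/I)$ and Nakayama gives $I=\M$. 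All computations below are therefore carried out modulo $\M^2$ using the explicit linear forms of Theorem~\ref{thm:modulom}, and the two hypotheses $\Ga_{k-2}\neq 0$ and $\Gb_{k-1}\neq 0$ supply the nonvanishing leading coefficients.

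I would then build up the basis triangularly in a carefully chosen order. First, from the first family: $\X_1^1(h)=-k\Gb_{k-1}W_2$, so $W_2$ is in the span; then for $m=2,\dots,k-1$ the form $\X_m^1(h)$ has top term $k\Gb_{k-1}V_{m-1}$ with lower terms only in $W_2,V_1,\dots,V_{m-2}$, so by induction $V_1,\dots,V_{k-2}$ all lie in the span. One checks that each $\X_j^3(h)$ of the third family involves only $W_2$ and $V_1,\dots,V_{k-2}$, hence contributes nothing new. Next, from the second family (reduced modulo the forms already obtained): $\X_{k-1}^2(h)=-k^2\Ga_{k-2}W_1$ yields $W_1$, and for $j=k-2,k-3,\dots,2$ the reduced $\X_j^2(h)$ has top term $(j+1)\Ga_{k-2}U_{k-j-1}$ modulo $W_1$ and lower $U$'s, so one successively recovers $U_1,\dots,U_{k-3}$.

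At this stage every basis vector except $U_{k-2}$ and $V_{k-1}$ is in the span. The final step pairs $h$ with $\X_1^2(h)$: modulo everything already obtained these reduce to $\Ga_{k-2}U_{k-2}+\Gb_{k-1}V_{k-1}$ and $2\Ga_{k-2}U_{k-2}-k(k-1)\Gb_{k-1}V_{k-1}$ respectively. The coefficient determinant is $-\Ga_{k-2}\Gb_{k-1}\bigl(k(k-1)+2\bigr)$, which is nonzero since $\Ga_{k-2},\Gb_{k-1}\neq 0$ and $k(k-1)+2>0$; hence $U_{k-2}$ and $V_{k-1}$ both lie in the span, completing the spanning of $\M/\M^2$ and, by the reduction above, the proof.

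The main obstacle I anticipate is the bookkeeping with the dummy conventions ($U_k=1$, $U_0=V_0=0$, and vanishing of out-of-range subscripts), since these are exactly what convert apparently quadratic terms into the recorded linear contributions and what fix the ranges of the summations. The one genuinely non-formal point is the last step: it is the only place where both hypotheses must be used simultaneously, and the theorem would fail were the $2\times2$ system singular, so the essential verification is that $k(k-1)+2\neq0$ together with $\Ga_{k-2}\Gb_{k-1}\neq0$.
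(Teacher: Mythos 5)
Your proposal is correct and follows essentially the same route as the paper's proof: Nakayama's Lemma reduces to spanning $\M/\M^2$, the first family triangularly yields $W_2,V_1,\dots,V_{k-2}$, the third family is noted to be superfluous, the second family yields $W_1,U_1,\dots,U_{k-3}$, and finally $h$ together with $\X^2_1(h)$ gives $U_{k-2}$ and $V_{k-1}$ via a nonsingular $2\times 2$ system. Your explicit determinant $-\Ga_{k-2}\Gb_{k-1}\bigl(k(k-1)+2\bigr)$ makes precise the paper's closing step (and uses the correct coefficient $2\Ga_{k-2}U_{k-2}$, where the paper has a typo $2k\Ga_{k-1}U_{k-2}$).
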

\begin{proof}
Let $I$ be an ideal in $\EE _{2k-1}$. By Nakayma's Lemma we have that $I=\M $ if and only if $I+\M^2 = \M$, (see \cite{wall} Lemma~2, page 929).
We shall have $I= \langle \X ^1_j (h) ,  \X ^2_j (h),  \X ^3_j (h) , h \rangle _{j=1}^{k-1}$.

Consider the elements of $I$ generated by $\X ^1_j (h)$. 
By Theorem~\ref{thm:modulom}  we can see that (modulo $\M ^2$) these elements are $\sum_{i=1}^{j-1}\Gb_{k-j+i}V_i-\Gb_{k-j}W_2$. They can be better understood if we write them as the product of two matrices:
\begin{displaymath}\lt(\begin{array}{c c c c c c c c c}
0 & \hdots & \hdots & \hdots & \hdots & \hdots & \hdots & 0 & -\Gb_{k-1}\\
\Gb_{k-1} & 0 & \hdots & \hdots & \hdots & \hdots & \hdots & 0 & -\Gb_{k-2}\\
\Gb_{k-2} & \Gb_{k-1} & 0 & \hdots & \hdots & \hdots & \hdots & 0 & -\Gb_{k-3}\\
\vdots & \vdots & \vdots & \ddots & & & & \vdots & \vdots\\
\vdots & \vdots & \vdots & & & \ddots & & \vdots & \vdots\\
\Gb_3 & \Gb_4 & \Gb_5 & \hdots & \hdots & \hdots & \hdots & 0 & -\Gb_2\\
\Gb_2 & \Gb_3 & \Gb_4 & \hdots & \hdots & \hdots & \hdots & \Gb_{k-1} & -\Gb_1
\end{array}\rt)
\lt(\begin{array}{c}
V_1\\ V_2\\ V_3\\ \vdots\\ \vdots\\ V_{k-2}\\ W_2
\end{array}\rt).
\end{displaymath}
Written in this form it is easy to see that if $\beta _{k-1}$ is non zero, then 
\[
\langle \X ^1_j (h) \rangle _{j=1}^{k-1}+\M ^2 = \langle W_2, V_1 , V_2 , \dots, V_{k-2}\rangle + \M ^2 .
\]
Note that we have not yet concluded that $V_{k-1}$ is in our ideal. 

Consider the elements $\X ^3_j (h)$, $1\leq j \leq k-1$. We can see from Theorem~\ref{thm:modulom} (again modulo $\M ^2$) that these are dependent on $W_2, V_1 , V_2 , \dots, V_{k-2} $. The variable $V_{k-1}$ does not actually occur with non-zero coefficient. Thus
\[
\langle \X ^1_j (h) ,  \X ^3_j (h) \rangle _{j=1}^{k-1} +\M ^2 =
\langle \X ^1_j (h)  \rangle _{j=1}^{k-1} +\M ^2 .
\]
(Therefore, in some sense, the third family is superfluous to our calculations.)

Turning to the second family, modulo $\langle \X ^1_j (h)  \rangle _{j=1}^{k-1} +\M ^2$ we can write $\X ^2_j (h)$ for $2\leq j\leq k-1$ (in particular $k>2$) as the product of two matrices: 
\[
k
\lt(\begin{array}{c c c c c c c c c}
0 & \hdots & \hdots & \hdots & \hdots & \hdots & \hdots & 0 & -k\Ga_{k-2}\\
(k-1)\Ga_{k-2} & 0 & \hdots & \hdots & \hdots & \hdots & \hdots & 0 & -k\Ga_{k-3}\\
(k-1)\Ga_{k-3} & (k-2)\Ga_{k-2} & 0 & \hdots & \hdots & \hdots & \hdots & 0 & -k\Ga_{k-4}\\
\vdots & \vdots & \vdots & \ddots & & & & \vdots & \vdots\\
\vdots & \vdots & \vdots & & & \ddots & & \vdots & \vdots\\
(k-1)\Ga_{3} & (k-2)\Ga_4 & (k-3)\Ga_5 & \hdots & \hdots & \hdots & \hdots & 0 & -k\Ga_2\\
(k-1)\Ga_{2} & (k-2)\Ga_3 & (k-3)\Ga_4 & \hdots & \hdots & \hdots & \hdots & 3\Ga_{k-2} & -k\Ga_1
\end{array}\rt)
\lt(\begin{array}{c}
U_{k-3}\\ U_{k-4}\\ \vdots\\ \vdots\\ U_2 \\ U_1 \\ W_2
\end{array}\rt).
\]
Since $\X^2_{k-1}(h)=-k^2\alpha _{k-2} W_1-k\beta _{k-1} V_1\mod \M^2 $ we deduce that $W_1$ is in our ideal. With the elements written in the form of a matrix we can see easily that if $\alpha _{k-2}$ is non-zero, then $U_1, \dots , U_{k-3}$ are in the ideal $I+\M^2 $. 

Using 
\[
\xi ^2_1(h)=2k\alpha _{k-1} U_{k-2}-k(k-1)\beta_{k-1} V_{k-1} \mod \langle U_1, \dots ,U_{k-3} ,V_1, \dots ,V_{k-2} ,W_1,W_2 \rangle 
\]
and 
\[
h= \alpha _{k-2}U_{k-2}+\beta _{k-1} V_{k-1} \mod   \langle U_1, \dots ,U_{k-3} ,V_1, \dots ,V_{k-2} ,W_1,W_2 \rangle 
\]
it is easy to deduce that $U_{k-2}$ and $V_{k-1}$ are also in our ideal. Thus 
\[
\langle \X ^1_j (h) ,  \X ^2_j (h),  \X ^3_j (h) , h \rangle _{j=1}^{k-1} +\M ^2 = \M 
\]
as required.
\end{proof}

The following is Theorem~4.5.1 of \cite{danthesis}.
\begin{corollary}
\label{cor:aecod1}
Let $\varphi _k:(\C ^{2k-1},0) \to (\C ^{2k-1},0)$ be the complex minimal cross cap of multiplicity $k>2$ and let $h$ be the general linear map above.  
Suppose that $\alpha _{k-2}$ and $\beta _{k-1}$ are non-zero and that $g$ is an immersion parametrizing $h^{-1}(0)$. Then,
\[
\AA _e - \cod (g^*(\varphi _k)) = 1.
\]
\end{corollary}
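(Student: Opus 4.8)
The plan is to reduce the computation of $\AA_e-\cod(g^*(\varphi_k))$ to an ideal that has already been analysed in Theorem~\ref{thm:mainlinear}, by stringing together Theorem~\ref{thm:damon}, Corollary~\ref{cor:generate}, and that theorem. First I would apply Theorem~\ref{thm:damon} with $F=\varphi_k$, which is stable, and with $h$ the given linear submersion whose zero-set is parametrised by $g$. This yields
\[
\AA_e-\cod(g^*(\varphi_k)) = {}_V\KK_e-\cod(h),
\]
where $V$ is the image of $\varphi_k$ (which coincides with its discriminant since $n<p$). To legitimately invoke the theorem I must first verify that $g$ is transverse to $\varphi_k$. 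Since $h$ is linear, $T_0(h^{-1}(0))=\ker h$ is a hyperplane, and transversality fails exactly when $d\varphi_k(T_0(\C^{2k-2}))\subseteq\ker h$. Evaluating the Jacobian $J_{\varphi_k}$ at the origin shows that its image is the $(\underline U,\underline V)$-coordinate subspace; as $\alpha_{k-2}\neq0$, the functional $h$ does not vanish on this subspace, so the inclusion fails and transversality holds.

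Next I would unwind the definition of ${}_V\KK_e$-codimension. Because $h$ has a single component ($q=1$), the module in the denominator is the ideal $\langle h\rangle$, so
\[
{}_V\KK_e-\cod(h) = \dim_\C \frac{\EE_{2k-1}}{\langle \xi(h):\xi\in\derlog(V)\rangle+\langle h\rangle}.
\]
By Corollary~\ref{cor:generate}, $\derlog(V)=\langle \xi_e,\xi_j^1,\xi_j^2,\xi_j^3\rangle_{j=1}^{k-1}$, so the ideal $\langle \xi(h):\xi\in\derlog(V)\rangle$ is generated by $\xi_e(h)$ together with the functions $\xi_j^i(h)$ for $1\le i\le3$ and $1\le j\le k-1$.

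The decisive input is now Theorem~\ref{thm:mainlinear}: under the standing hypotheses $k>2$, $\alpha_{k-2}\neq0$, $\beta_{k-1}\neq0$ it gives $\langle \xi_j^1(h),\xi_j^2(h),\xi_j^3(h),h\rangle_{j=1}^{k-1}=\M$. Every generator of the denominator ideal lies in $\M$ (the $\xi_j^i(h)$ and $h$ by that theorem, and $\xi_e(h)$ because it is a $\C$-linear combination of coordinate functions), so adjoining $\xi_e(h)$ cannot push the ideal beyond $\M$; hence the denominator ideal is exactly $\M$. Therefore ${}_V\KK_e-\cod(h)=\dim_\C \EE_{2k-1}/\M=1$, and combining this with the first reduction gives $\AA_e-\cod(g^*(\varphi_k))=1$.

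The argument is largely assembly: the genuinely substantial work has already been carried out in Theorem~\ref{thm:mainlinear}, where the vector fields applied to $h$ are shown to fill out the maximal ideal. The only step demanding independent care is the transversality check securing the hypotheses of Theorem~\ref{thm:damon}, and it is precisely there that the condition $\alpha_{k-2}\neq0$ first earns its keep.
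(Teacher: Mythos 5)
Your proposal is correct and follows essentially the same route as the paper's own proof: reduce via Theorem~\ref{thm:damon} to ${}_V\KK_e$-codimension, use Corollary~\ref{cor:generate} to identify the generators of the denominator ideal, invoke Theorem~\ref{thm:mainlinear}, and note that adjoining $\xi_e(h)\in\M$ does not enlarge the ideal beyond $\M$. The only difference is that you spell out the transversality check (image of $d\varphi_k$ at $0$ is the $(\underline{U},\underline{V})$-subspace, not contained in $\ker h$ since $\alpha_{k-2}\neq 0$), which the paper dismisses as an easy calculation.
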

\begin{proof}
It is easy to calulate that $g$ is transverse to $\varphi $. 
By Theorem~\ref{thm:damon} we know that
\[
\AA _e{\text{-cod }}(g^*(\varphi _k)) =_V\KK _e-\cod (h) 
=\dim _\K  \dfrac{\EE _{2k-1}}{\langle  \X ^1_j (h) ,  \X ^2_j (h),  \X ^3_j (h) , \xi _e(h),  h \rangle _{j=1}^{k-1} }.
\] 
Since $\xi _e(h)\in \M $ we can see from Theorem~\ref{thm:mainlinear} that 
\[
\langle  \X ^1_j (h) ,  \X ^2_j (h),  \X ^3_j (h) , \xi _e(h),  h \rangle _{j=1}^{k-1} = \M ,
\]
from which the result follows.
\end{proof}

\begin{remarks}
\begin{enumerate}
\item 
In the proof of Theorem~7.3 of \cite{topaug}, following the ideas in \cite{damon1,cplxgen}, it is proved that any sufficiently generic linear function on the minimal cross cap induces an $\AA _e$-codimension $1$ map-germ. Here, the result is improved: we have precise conditions for genericity.
 
\item Note that in the proof of the corollary the Euler vector field, in some sense, plays no part in the fact that the pull-back map is $\AA _e$-codimension $1$. Note that the third family plays no part either. This will be explored in a subsequent paper.
\end{enumerate}
\end{remarks}

\end{document}